\newcommand{\cdo}{\, \cdot \, }
\newcommand{\eabs}[1]{\langle #1\rangle}
\newcommand{\scal}[2]{\langle #1,#2\rangle}
\newcommand{\Scal}[2]{\left \langle #1,#2\right \rangle}
\newcommand{\rr}[1]{\mathbf R^{#1}}
\newcommand{\zz}[1]{\mathbf Z^{#1}}
\newcommand{\nn}[1]{\mathbf N^{#1}}
\newcommand{\opBJ}{\operatorname*{Op}\nolimits_{\mathrm{BJ}}}
\newcommand{\opp}[1]{\operatorname*{Op}\nolimits_{#1}}
\newcommand{\op}{\operatorname*{Op}\nolimits}
\newcommand{\nm}[2]{\Vert #1\Vert _{#2}}
\newcommand{\nmm}[1]{\Vert #1\Vert}
\newcommand{\BJ}{\operatorname{BJ}}
\newcommand{\sinc}{\operatorname{sinc}}
\newcommand{\ep}{\varepsilon}
\newcommand{\SG}{\operatorname{SG}}
\newcommand{\vrum}{\vspace{0.1cm}}
\newcommand{\abp}[1]{\vert #1\vert}
\newcommand{\sets}[2]{\{ \, #1\, ;\, #2\, \} }
\newcommand{\maclB}{\mathcal B}
\newcommand{\maclK}{\mathcal K}
\newcommand{\maclS}{\mathcal S}
\newcommand{\mascB}{\mathscr B}
\newcommand{\mascF}{\mathscr F}
\newcommand{\mascI}{\mathscr I}
\newcommand{\mascN}{\mathscr N}
\newcommand{\mascP}{\mathscr P}
\newcommand{\mascS}{\mathscr S}
\newcommand{\mabfj}{\boldsymbol j}
\newcommand{\mabfk}{\boldsymbol k}
\newcommand{\mabfm}{\boldsymbol m}
\newcommand{\mabfn}{\boldsymbol n}
\providecommand{\U}[1]{\protect\rule{.1in}{.1in}}
\numberwithin{equation}{section}          
\newtheorem{thm}{Theorem}
\numberwithin{thm}{section}
\newcommand{\rubrik}{}
\newtheorem{prop}[thm]{Proposition}
\newtheorem{lemma}[thm]{Lemma}
\theoremstyle{definition}
\newtheorem{defn}[thm]{Definition}
\theoremstyle{remark}
\newtheorem{rem}[thm]{Remark}
\author{Maurice de Gosson}
\address{Faculty of Mathematics, NuHAG,
University of Vienna, Vienna, Austria}
\email{maurice.degosson@gmail.com}
\author{Joachim Toft}
\address{Department of Mathematics,
Linn{\ae}us University, V{\"a}xj{\"o}, Sweden}
\email{joachim.toft@lnu.se}
\title{Continuity properties for Born-Jordan operators with symbols in
H{\"o}rmander classes and modulation spaces}
\begin{document}

\begin{abstract}
We show that the Weyl symbol of a Born-Jordan operator
is in the same class as the Born-Jordan symbol, when
H{\"o}rmander symbols and
certain types of modulation spaces are used as symbol
classes. We use these properties to carry over
continuity, nuclearity and Schatten-von Neumann properties to the
Born-Jordan calculus.
\end{abstract}

\keywords{Quantization, Schatten-von Neumann, Feffermann-Phong's inequality}

\subjclass[2010]{35S99, 81Sxx,47B10}

\maketitle

\section{Introduction}\label{sec0}


\par

A fundamental question in quantum mechanics concerns quantization. That is,
finding rules which takes observables $a$ in classical mechanics into
corresponding observables $T_a$ in quantum mechanics.
Usually $a$ is a
function of the location $x\in \rr d$ (the configuration variable) and the momentum
$\xi \in \rr d$, and $T_a$ is a linear operator which acts between suitable Hilbert
spaces of functions on $\rr d$.

\par

The Born-Jordan quantization
\begin{equation}\label{Eq:ClassicBJ}
\op _{\BJ}(a) = \frac 1{l+1}\sum _{k=0}^l D _j^{l-k}\circ x_j^m\circ D _j^k,\quad
a(x,\xi ) = x_j^m\xi _j^l
\end{equation}
introduced in early days by Born and Jordan in \cite{BJ},
is nowadays considered as an important quantization
rule (see e.{\,}g.
\cite{Transam,deGosson2,golu}). In a context of the calculus of
pseudo-differential
operators, Born-Jordan quantization is given by
\begin{equation}\label{Eq:PseudoBJ1}
\op _{\BJ}(a) \equiv \int _0^1\op _t (a)\, dt,
\end{equation}
where the \emph{symbol} $a(x,\xi )$ is allowed to belong to more
general classes compared to \eqref{Eq:ClassicBJ} (see Section
\ref{sec1} for details and notations).
Here $\op _t(a)$ is the ($t$-Shubin) pseudo-differential operator with
symbol $a$,
given by
\begin{align*}
(\op _t (a)f)(x) &= (2\pi )^{-d}\iint _{\rr {2d}} a((1-t )x+t y,\xi )f(y)
e^{i\scal {x-y}\xi}\, dyd\xi .
\intertext{By considering the right-hand side of \eqref{Eq:PseudoBJ1}
as a mean-value
of the $\op _t(a)$ operators, the median of the integrand is the Weyl operator
(Weyl quantization) $\op ^w(a)=\op _{1/2}(a)$,
given by}
(\op ^w (a)f)(x) &= (2\pi )^{-d}\iint _{\rr {2d}} a(\textstyle {\frac 12}(x+y),\xi )f(y)
e^{i\scal {x-y}\xi}\, dyd\xi ,
\end{align*}
which is also considered as a suitable quantization rule. For example,
among the operator representations above, only
$\op _{\BJ}(a)$ and $\op ^w(a)$ possess the property to necessarily
being self-adjoint (Hermit operators) when $a$ is real-valued.

\par

By using
\begin{equation}\label{Eq:CalculTransfer}
\op _t(a)=\op ^w(a_t),
\quad \text{where}\quad
a_t=e^{i(\frac 12-t)\scal {D_\xi}{D_x}}a,
\end{equation}
the formula \eqref{Eq:PseudoBJ1} becomes
\begin{equation}\label{Eq:WeylBJ1}
\begin{aligned}
\op _{\BJ}(a) &= \op ^w(a_{\BJ}),
\\[1ex]
a_{\BJ}
&=
\int _0^1 e^{i(\frac 12-t)\scal {D_\xi}{D_x}}a\, dt
= 
2\sinc (\textstyle{\frac 12}\scal {D_\xi}{D_x})a,
\end{aligned}
\end{equation}
which puts the Born-Jordan quantization within the frame of the
Weyl calculus of pseudo-differential operators. Here $\sinc t$ is the
sinc function, given by
$$
\sinc t =
\begin{cases}
\frac {\sin t}t, &t\neq 0,
\\[0.5ex]
1, &t=0.
\end{cases}
$$

\par

During the last 10 years, Born-Jordan quantization is also recognized in
time-frequency analysis. In this field, time-frequency resolutions by
Wigner distributions,
i.{\,}e. simultaneously localizations of the time and frequency for
signals, are  essential.
A problem here concerns interpolating frequencies or so-called
ghost frequencies,
which originate from interference of existing frequencies but
are absent in the signal, but are present in the graphs of
their resolutions. (See \cite{bogetal1,bogetal2,Tur}.)
Especially we remark that in \cite{Tur}, Turunen shows that the
time-frequency resolutions
usually becomes significantly more clear when using Born-Jordan
versions  $W_{\BJ}(f,g)$
in place of classical time-frequency resolutions like the Wigner
distributions $W(f,g)$.
For example, it is shown in \cite{Tur} that the ghost frequencies
miraculously almost disappear when using suitable resolutions
based on the  $W_{\BJ}$ transform. See also \cite{CoGoNi4}
for other related facts.

\medspace

The impact of Born-Jordan operators in quantization and
time-frequency analysis leads to
questions on continuity for such operators. In quantization,
it is suitable to consider
general H{\"o}rmander classes $S(m,g)$ on the phase space.
Recall that $S(m,g)$ agrees with classical symbol classes like
$S^r_{\rho ,\delta }(\rr {2d})$, $\SG$-classes or Shubin classes,
by choosing the Riemannian metric $g$ and the weight function
$m$ in appropriate ways.
In time-frequency analysis, it is suitable to use (classical)
modulation spaces, $M^{p,q}_{(\omega )}(\rr {2d})$ as symbol
classes, because they are especially adapted for energy
estimates for time-frequency representations (cf. e.{\,}g.
\cite{Str}). These spaces were introduced in \cite{Fe4} by
Feichtinger and are obtained by imposing an
$L^{p,q}_{(\omega )}$ condition on the short-time Fourier
transform on the involved functions and (ultra-)distributions.
(See also \cite{FG1,GaSa,Gc2} and the references therein
for more facts on modulation spaces.)

\par

In Sections \ref{sec2}--\ref{sec5} we deduce several types of
continuity properties for Born-Jordan operators. In similar way
as in e.{\,}g. \cite{CoGoNi1,CoGoNi2,CoGoNi3}, the main idea
is to use \eqref{Eq:PseudoBJ1}
to carry over continuity properties in pseudo-differential calculus to
Born-Jordan operators.
In Section \ref{sec2} we consider Born-Jordan operators with symbols in
the Schwartz space or in certain Gelfand-Shilov spaces and their duals.
For example, we regain the fact from \cite{CoGoNi1} that 
\eqref{Eq:PseudoBJ1} leads to
\begin{equation}\label{Eq:BJSymbDistRel}
a_{\BJ}\in \mascS '(\rr {2d}) \quad
\quad \text{when}\quad
a\in \mascS '(\rr {2d}),
\end{equation}
where $\mascS '(\rr {2d})$ is the set of tempered distributions
on $\rr {2d}$.
(See Theorems \ref{Thm:BJopsDistrMap} and \ref{BJGSSymbols}.)
In Section \ref{sec2} it is proved that
the same holds true with $\maclS _s(\rr {2d})$,
$\Sigma _s(\rr {2d})$ or their duals in place of $\mascS '(\rr {2d})$, where
$\maclS _s(\rr d)$ ($\Sigma _s(\rr d)$) is the Fourier invariant
Gelfand-Shilov space of Roumieu (Beurling) type of order $s>0$ on $\rr d$.
In particular it follows that the following holds true:
\begin{alignat*}{4}
a &\in \mascS '(\rr {2d})&
\quad &\Rightarrow &\quad
\op _{\BJ}(a) \, &:Ê\, &
\mascS (\rr d) &\to \mascS '(\rr d)
\\[1ex]
a &\in \maclS _s'(\rr {2d})&
\quad &\Rightarrow &\quad
\op _{\BJ}(a) \, &:Ê\, &
\maclS _s(\rr d) &\to \maclS _s'(\rr d),
\intertext{and}
a &\in \Sigma _s'(\rr {2d})&
\quad &\Rightarrow &\quad
\op _{\BJ}(a) \, &:Ê\, &
\Sigma _s(\rr d) &\to \Sigma _s'(\rr d)
\end{alignat*}
(whith continuous mappings), because the same continuity
properties hold true with $\op _t(a)$ in place of $\op _{\BJ}(a)$ for every $t$.
We remark that our investigations include more general
Gelfand-Shilov spaces and their distributions, which do not need
to be Fourier invariant.
(See Theorem \ref{Thm:BJopsDistrMap}.)
These properties give a solid basement of our investigations.

\par

In Section \ref{sec3} we consider Born-Jordan operators with symbols in
modulation spaces, and prove that
\begin{equation}\label{Eq:BJSymbDistRel}
a_{\BJ}\in M^{p,q}_{(\omega )}(\rr {2d}) \quad
\quad \text{when}\quad
a\in M^{p,q}_{(\omega )}(\rr {2d})
\end{equation}
when $p,q\in (0,\infty ]$, provided the weight $\omega (x,\xi ,\eta ,y)$
is constant with respect to the $x$ and $\xi$ variables. (See Theorem
\ref{BJModSymbols}.) It is well-known
that for such $\omega$, the map $e^{it\scal {D_\xi}{D_x}}$ is continuous on
$M^{p,q}_{(\omega )}(\rr {2d})$ for every $t$. (Cf. \cite[Proposition 1.9]{Toft21}.)
It follows that 
In the special case \eqref{Eq:BJSymbDistRel} $p,q\ge 1$,  is deduced
by a straight-forward combination of \eqref{Eq:PseudoBJ1},
\eqref{Eq:CalculTransfer} and Minkowski's inequality. For such choices of
$p$ and $q$ and if all weights are trivially equal to one,
then these investigations are related to those in
\cite{CoGoNi2}. For example Theorem \ref{Thm:BornJordOpModCont}
in Section \ref{sec3} overlaps with \cite[Theorem 5.1]{CoGoNi2}. 

\par

In order to reach \eqref{Eq:BJSymbDistRel} in the 
general case, the possible lack of local-convexity of involved spaces,
impose
a more comprehensive analysis compared to the restricted case
$p,q\ge 1$. In our approach, the symbol $a$ in \eqref{Eq:BJSymbDistRel}
is expressed in terms of its Gabor expansion, using the fact that
Gabor theory works properly for modulation spaces when
$p$ and $q$ are allowed to be smaller than $1$. (Cf. \cite{GaSa,Toft9}.)
By inserting such expansions in \eqref{Eq:PseudoBJ1},
\eqref{Eq:CalculTransfer} and performing some refined computations,
we finally land on \eqref{Eq:BJSymbDistRel}.

\par

As a consequence of \eqref{Eq:BJSymbDistRel} we get, e.{\,}g.,
\begin{align*}
a&\in M^{\infty ,q}(\rr {2d}),\ q\le 1,\ p_1,q_1\in [q,\infty]
\\[1ex]
&{\phantom k}\qquad \qquad \qquad \Rightarrow \quad
\op _{BJ}(a)\, :\, M^{p_1,q_1}(\rr d)\to M^{p_1,q_1}(\rr d),
\\[1ex]
a&\in M^{p,\min (p,p')}(\rr {2d}),\ p>0
\quad \Rightarrow \quad
\op _{BJ}(a) \in \mascI _p(L^2(\rr d))
\\[1ex]
a&\in M^{p}(\rr {2d}),\ p\le 1
\quad \Rightarrow \quad
\op _{BJ}(a) \in \mascN _p(M^\infty (\rr d), M^p (\rr d)),
\end{align*}
because the same hold true with $\op _t(a)$ in place of $\op _{BJ}(a)$
(cf. \cite{Toft19,Toft21}). Here $p'$ is the conjugate exponent of $p\in (0,\infty ]$,
given by
$$
p'=
\begin{cases}
\infty  & \text{when}\ p\le 1,
\\[1ex]
\frac p{p-1} & \text{when}\ 1<p<\infty ,
\\[1ex]
1 & \text{when}\ p=\infty .
\end{cases}
$$

\par

In Section \ref{sec4} we deduce
continuity properties for Born-Jordan
operators with symbols in the (general) H{\"o}rmander class $S(m,g)$,
where $g$ is a strongly feasible Riemannian metric  and $m$ is
$(\sigma ,g)$-temperated metric on the
phase space $\rr {2d}$. We prove that if in addition $g$ is \emph{split} in the sense
$g_X(y,-\eta )=g_X(y,\eta )$, then
\begin{equation}\label{Eq:BJHormSymbRel}
a_{\BJ}\in S(m,g) \quad
\quad \text{when}\quad
a\in S(m,g)
\end{equation}
In particular, all continuity properties for pseudo-differential operators
in \cite{BuTo,Hormweyl,Toft4,Toft19} with symbols in $S(m,g)$,
carry over to Born-Jordan operators with symbols in the same class.
In particular it follows that
$$
a\in S(m,g)
\quad \Rightarrow
\begin{cases}
\op _{\BJ}(a) \, :\, \mascS (\rr d)\to \mascS (\rr d),
\\[1ex]
\op _{\BJ}(a) \, :\, \mascS '(\rr d)\to \mascS '(\rr d)
\end{cases}
$$
(see Theorem \ref{BJSchwartzcont}), and that
$$
m\in L^p(\rr {2d}), a\in S(m,g)
\quad \Rightarrow \quad
\op _{\BJ}(a)\in \mascI _p(L^2(\rr d))
$$
(see Theorems \ref{HormSchattenBJ} and \ref{HormSchattenBJcor}),
because the same hold true with $\op ^w(a)$ in place of $\op _{\BJ}(a)$
(see \cite[Theorem 18.6.2]{Horm}, \cite[Theorem 2.9]{BuTo},
\cite[Theorem 4.4]{Toft4}and \cite[Theorem 4.1]{Toft19}).

\par

In the last part of Section \ref{sec4} we deduce classical
lower bound estimates for Born-Jordan
operators. In fact, by asymptotic expansions it follows that if
$a,a_{\BJ}\in S(m,g)$ satisfy \eqref{Eq:WeylBJ1}, then
\begin{equation}\label{Eq:DiffWeylBJSymb}
a-a_{\BJ}\in S(h_g^2m,g),
\end{equation}
where $h_g$ is the Planck's function. This leads to that fundamental lower
bound results carry over from the Weyl
case to Born-Jordan case. In fact, Sharp G{\aa}rding's
and Feffermann-Phong's inequalities as well as H{\"o}rmander's improvement
of Melin's inequality, given by Theorems 18.6.7 and 18.6.8 in \cite{Horm} and
Theorem 6.2 in \cite{Horm0}, are some of the most well-known lower bound 
results in
pseudo-differential calculus. It follows from the small difference between
$a$ and $a_{\BJ}$ in view of \eqref{Eq:DiffWeylBJSymb} that these lower bound
results carry over to Born-Jordan operators
(see e.{\,}g. Theorem \ref{GaardingIneq} in Section \ref{sec4}).

\par

In Section \ref{sec5} we consider Born-Jordan operators of so-called
infinite orders. That is, in contrast to the H{\"o}rmander classes, $S(m,g)$,
the involved symbols are allowed to grow faster than polynomials.
On the other hand, it is assumed that the symbols obey
stronger regularity conditions than what is required in the class $S(m,g)$.
We consider operators with symbols in $\Gamma _{s,\sigma ;0}
^{\sigma ,s}(\rr {2d})$ or $\Gamma _{s,\sigma}^{\sigma ,s;0}(\rr {2d})$,
considered in \cite{AbCaTo} (see Definition \ref{Def:ExtGSSymbClasses}
in Section \ref{sec5}). In \cite{AbCaTo} it is deduced that
pseudo-differential operators with symbols in such classes
are continuous on suitable Gelfand-Shilov spaces and their duals.
In Section \ref{sec5} we use \eqref{Eq:WeylBJ1} to
carry over these continuity properties to Born-Jordan operators
with symbols in $\Gamma _{s,\sigma ;0}^{\sigma ,s}(\rr {2d})$
or $\Gamma _{s,\sigma}^{\sigma ,s;0}(\rr {2d})$.

\par

\section*{Acknowledgement} Maurice de Gosson has been
supported by the Austrian research agency FWF
(grant number P27773).

\par

\section{Preliminaries}\label{sec1}

\par

In this section we start by recalling some facts about Gelfand-Shilov spaces of
functions and distributions. Thereafter, we recall the definition of
pseudo-differential operators and Born-Jordan operators. Some basic properties
for Schatten-von Neumann and nuclear operator classes are then discussed
in Subsection \ref{subsec1.4}. We conclude the session by recalling some facts
on modulation spaces.

\par

\subsection{Gelfand-Shilov spaces and their duals}
We start by recalling some facts about Gelfand-Shilov spaces.
Let $0<h,s,\sigma \in \mathbf R$ be fixed. Then
$\maclS _{s;h}^{\sigma}(\rr d)$ is
the Banach space of all $f\in C^\infty (\rr d)$ such that
\begin{equation}\label{gfseminorm}
\nm f{\maclS _{s;h}^{\sigma}}\equiv \sup_{\alpha ,\beta \in
\mathbf N^d} \sup_{x \in \rr d} \frac {|x^\alpha \partial ^\beta
f(x)|}{h^{|\alpha  + \beta |}\alpha !^s\, \beta !^\sigma}<\infty,
\end{equation}
endowed with the norm \eqref{gfseminorm}.

\par

The \emph{Gelfand-Shilov spaces} $\maclS _{s}^{\sigma}(\rr d)$ and
$\Sigma _{s}^{\sigma}(\rr d)$ are defined as the inductive and projective 
limits respectively of $\maclS _{s;h}^{\sigma}(\rr d)$. This implies that
\begin{equation}\label{GSspacecond1}
\maclS _{s}^{\sigma}(\rr d) = \bigcup _{h>0}\maclS _{s;h}^{\sigma}(\rr d)
\quad \text{and}\quad \Sigma _{s}^{\sigma}(\rr d) =\bigcap _{h>0}
\maclS _{s;h}^{\sigma}(\rr d),
\end{equation}
and that the topology for $\maclS _{s}^{\sigma}(\rr d)$ is the strongest
possible one such that the inclusion map from $\maclS _{s;h}^{\sigma}
(\rr d)$ to $\maclS _{s}^{\sigma}(\rr d)$ is continuous, for every choice 
of $h>0$. The space $\Sigma _{s}^{\sigma}(\rr d)$ is a Fr{\'e}chet space
with seminorms $\nm \cdo {\maclS _{s;h}^{\sigma}}$, $h>0$. Moreover,
$\Sigma _{s}^{\sigma}(\rr d)\neq \{ 0\}$, if and only if $s+\sigma \ge 1$
and $(s,\sigma )\neq (\frac 12,\frac 12)$, and
$\maclS _{s}^{\sigma}(\rr d)\neq \{ 0\}$, if and only
if $s+\sigma \ge 1$.

\par

In terms of the exponential type decays, $\maclS _{s}^{\sigma}(\rr d)$
and $\Sigma _{s} ^{\sigma}(\rr d)$ are characterized as $f \in \maclS
_{s}^{\sigma}(\rr d)$ ($f \in \Sigma _{s}^{\sigma}(\rr d)$), if and only if 
$$
|\partial^\alpha f(x)| \lesssim h^{|\alpha|} (\alpha!)^\sigma  e^{-r|x|^{\frac 1s}}
$$
for some $h,r>0$ (respectively for every $h,r>0$). 
Moreover we recall that for $s <1$ the elements of
$\maclS _{s}^{\sigma}(\rr d)$ admit entire 
extensions to $\mathbf{C}^d$ satisfying suitable exponential bounds,
cf. \cite{GS} for details.

\medspace

The \emph{Gelfand-Shilov distribution spaces} $(\maclS _{s}^{\sigma})'(\rr d)$
and $(\Sigma _{s}^{\sigma})'(\rr d)$ are the projective and inductive limits
respectively of $(\maclS _{s;h}^{\sigma})'(\rr d)$.  This implies that
\begin{equation}\tag*{(\ref{GSspacecond1})$'$}
(\maclS _{s}^{\sigma})'(\rr d) = \bigcap _{h>0}
(\maclS _{s;h}^{\sigma})'(\rr d)
\quad
\text{and}\quad (\Sigma _{s}^{\sigma})'(\rr d) =\bigcup _{h>0}
(\maclS _{s;h}^{\sigma})'(\rr d).
\end{equation}
We remark that in \cite{Pil1} it is proved that $(\maclS _{s}^{\sigma})'(\rr d)$
is the dual of $\maclS _{s,\sigma}(\rr d)$, and $(\Sigma _{s}^{\sigma})'(\rr d)$
is the dual of $\Sigma _{s}^{\sigma}(\rr d)$ (also in topological sense).

\par

For every $s,\sigma >0$ we have
\begin{equation}\label{Eq:GSEmbeddings}
\Sigma _s^\sigma (\rr d)
\hookrightarrow
\maclS _s^\sigma (\rr d)
\hookrightarrow
\Sigma _{s+\ep}^{\sigma +\ep}(\rr d)
\hookrightarrow
\mascS (\rr d)
\end{equation}
for every $\ep >0$. If $s+\sigma \ge 1$, then
the last two inclusions in \eqref{Eq:GSEmbeddings} are dense,
and if in addition $(s,\sigma )\neq (\frac 12,\frac 12)$, then the
first inclusion in \eqref{Eq:GSEmbeddings} is dense.

\par

From these properties it follows that $\mascS '(\rr d)\hookrightarrow
(\maclS _s^\sigma)'(\rr d)$ when $s+\sigma \ge 1$, and if in addition
$(s,\sigma )\neq (\frac 12,\frac 12)$, then $(\maclS _s^\sigma)'(\rr d)
\hookrightarrow (\Sigma _s^\sigma)'(\rr d)$.

\par

The Gelfand-Shilov spaces possess several convenient mapping
properties. For example they are nuclear and invariant under
translations, dilations, and to some extent tensor products
and (partial) Fourier transformations, cf. \cite{GS, Mit, Pil2}).

\par

We also need to involve a broader family of
Gelfand-Shilov spaces. More precisely, for $s_j,\sigma _j\in \mathbf R_+$,
$j=1,2$, the Gelfand-Shilov spaces $\maclS
_{s _1,s_2}^{\sigma _1,\sigma _2}(\rr {d_1+d_2})$ and
$\Sigma _{s _1,s_2}^{\sigma _1,\sigma _2}(\rr {d_1+d_2})$ consist of
all functions $F\in C^\infty (\rr {d_1+d_2})$ such that
\begin{equation}\label{GSExtCond}
|x_1^{\alpha _1}x_2^{\alpha _2}\partial _{x_1}^{\beta _1}
\partial _{x_2}^{\beta _2}F(x_1,x_2)| \lesssim
h^{|\alpha _1+\beta _1|+|\alpha _2+\beta _2|}
\alpha _1!^{s_1}\alpha _2!^{s_2}\beta _1!^{\sigma _1}\beta _2!^{\sigma _2}
\end{equation}
for some $h>0$ respective for every $h>0$. The  topologies, and the duals
\begin{alignat*}{3}
&(\maclS _{s _1,s_2}^{\sigma _1,\sigma _2})'(\rr {d_1+d_2}) &
&\quad \text{and} \quad &
&(\Sigma _{s _1,s_2}^{\sigma _1,\sigma _2})'(\rr {d_1+d_2})
\intertext{of}
&\maclS _{s _1,s_2}^{\sigma _1,\sigma _2}(\rr {d_1+d_2}) &
&\quad \text{and} \quad &
&\Sigma _{s _1,s_2}^{\sigma _1,\sigma _2}(\rr {d_1+d_2}),
\end{alignat*}
respectively, and their topologies
are defined in analogous ways as for the spaces $\maclS _s^\sigma (\rr d)$
and $\Sigma _s^\sigma (\rr d)$ above.

\par

From the inequalities $n!k!\le (n+k!)\le 2^{n+k}n!k!$, it follows by
straight-forward computations that
$\maclS _{s,s}^{\sigma ,\sigma} = \maclS _s^\sigma$,
$\Sigma _{s,s}^{\sigma ,\sigma} = \Sigma _s^\sigma$
and similarly for their duals. For convenience we set
$\maclS _s=\maclS _s^s$ and $\Sigma _s=\Sigma _s^s$.

\par

From now on we let $\mathscr F$ be the Fourier transform,
given by
$$
(\mathscr Ff)(\xi ) = \widehat f(\xi ) \equiv (2\pi )^{-d/2}\int _{\rr
{d}} f(x)e^{-i\scal  x\xi }\, dx
$$
when $f\in L^1(\rr d)$. Here $\scal \cdo \cdo$ denotes the
usual scalar product on $\rr d$. The map $\mathscr F$ extends 
uniquely to homeomorphisms from $\mathscr S'(\rr d)$ to
$\mathscr S'(\rr d)$, from $(\maclS _s^\sigma )'(\rr d)$ to
$(\maclS _\sigma ^s)'(\rr d)$ and from
$(\Sigma _s^\sigma )'(\rr d)$ to $(\Sigma _\sigma ^s)'(\rr d)$.
It also follows that $\mathscr F$ restricts
to homeomorphisms from $\mathscr S(\rr d)$ to
$\mathscr S(\rr d)$, from $\maclS _s^\sigma (\rr d)$ to
$\maclS _\sigma ^s(\rr d)$, from
$\Sigma _s^\sigma (\rr d)$ to $\Sigma _\sigma ^s(\rr d)$,
and to a unitary operator on $L^2(\rr d)$.

\par

\begin{rem}\label{Rem:PartFourGS}
In the same way, if $\mascF _kF$ is the partial Fourier transform
of $F(x_1,x_2)$ with respect to $x_j\in \rr {d_j}$, $s_j,\sigma _j>0$,
$j=1,2$ and $d=d_1+d_2$, then
\begin{align*}
\mascF _1 &: \Sigma _{s_1,s_2}^{\sigma _1,\sigma _2}(\rr d)
\to \Sigma _{\sigma _1,s_2}^{s_1,\sigma _2}(\rr d),
&\quad
\mascF _2 &: \Sigma _{s_1,s_2}^{\sigma _1,\sigma _2}(\rr d)
\to \Sigma _{s_1,\sigma _2}^{\sigma _1,s_2}(\rr d),
\\[1ex]
\mascF _1 &: \maclS _{s_1,s_2}^{\sigma _1,\sigma _2}(\rr d)
\to \maclS _{\sigma _1,s_2}^{s_1,\sigma _2}(\rr d),
&\quad
\mascF _2 &: \maclS _{s_1,s_2}^{\sigma _1,\sigma _2}(\rr d)
\to \maclS _{s_1,\sigma _2}^{\sigma _1,s_2}(\rr d),
\\[1ex]
\mascF _j &: \mascS (\rr d)
\to \mascS (\rr d),
&\quad
\mascF _j &: \mascS '(\rr d)
\to \mascS '(\rr d),
\\[1ex]
\mascF _1 &: (\maclS _{s_1,s_2}^{\sigma _1,\sigma _2})'(\rr d)
\to (\maclS _{\sigma _1,s_2}^{s_1,\sigma _2})'(\rr d),
&\quad
\mascF _2 &: (\maclS _{s_1,s_2}^{\sigma _1,\sigma _2})'(\rr d)
\to (\maclS _{s_1,\sigma _2}^{\sigma _1,s_2})'(\rr d)
\intertext{and}
\mascF _1 &: (\Sigma _{s_1,s_2}^{\sigma _1,\sigma _2})'(\rr d)
\to (\Sigma _{\sigma _1,s_2}^{s_1,\sigma _2})'(\rr d),
&\quad
\mascF _2 &: (\Sigma _{s_1,s_2}^{\sigma _1,\sigma _2})'(\rr d)
\to (\Sigma _{s_1,\sigma _2}^{\sigma _1,s_2})'(\rr d)
\end{align*}
are homeomorphisms, $j=1,2$.
\end{rem}

\medspace

Next we recall some mapping properties of Gelfand-Shilov
spaces under short-time Fourier transforms and $t$-Wigner distributions.
Let $\phi \in \mathscr S(\rr d)$ be fixed. For every $t \in \mathbf R$,
$f\in \mathscr S'(\rr d)$, the \emph{short-time Fourier transform} $V_\phi
f$ is the distribution on $\rr {2d}$ defined by the formula
\begin{equation}\label{defstft}
(V_\phi f)(x,\xi ) =\mascF (f\, \overline{\phi (\cdo -x)})(\xi ) =
(f,\phi (\cdo -x)e^{i\scal \cdo \xi}).
\end{equation}
The \emph{$t$-Wigner distribution} is given by
\begin{equation}\label{wignertdef2}
W_{t} (f,\phi )(x,\xi ) \equiv \mascF (f(x+t\cdo
)\overline{\phi (x-(1-t )\cdo )} )(\xi ).
\end{equation}
We observe that if $f,\phi \in \mascS (\rr d)$, then
$V_\phi f$ and $W_{t} (f,\phi )$ are given by
$$
(V_\phi f)(x,\xi ) =(2\pi )^{-\frac d2}\int _{\rr d}
f(y)\overline{\phi (y -x)} e^{-i\scal y\xi}\, dy
$$
and
$$
W_{t} (f,\phi )(x,\xi ) = (2\pi )^{-\frac d2}\int _{\rr d}
f(x+ty
)\overline{\phi (x-(1-t )y )} e^{-i\scal y\xi}\, dy.
$$

\par

The definition of short-time Fourier transforms and Wigner distributions
extend in different
ways, and possess various kinds of continuity properties. In the context
of test function
spaces and distribution spaces we have the following.

\par

\begin{prop}\label{Prop:WignerDistExt}
Let $s,\sigma >0$ be such that $s+\sigma \ge 1$ and let
$T(t ,f,\phi )\equiv V_\phi f$ or $T(t ,f,\phi )
\equiv W_{t} (f,\phi )$ when $f,\phi \in \mascS (\rr d)$. Then the following
is true:
\begin{enumerate}
\item the map $(t ,f,\phi)\mapsto T(t ,f,\phi )$ is continuous from
$\mathbf R\times \mascS (\rr d)\times \mascS (\rr d)$ to
$\mascS (\rr {2d})$ and restricts to a continuous map from
$\mathbf R\times \maclS _s^\sigma (\rr d)\times \maclS _s^\sigma
(\rr d)$ to $\maclS _{s,\sigma}^{\sigma ,s}(\rr {2d})$;


\item the map $(t ,f,\phi)\mapsto T(t ,f,\phi )$ from
$\mathbf R\times \maclS _s^\sigma (\rr d)\times \maclS _s^\sigma
(\rr d)$ to $\maclS _{s,\sigma}^{\sigma ,s}(\rr {2d})$
extends uniquely to a continuous map
from $\mathbf R\times (\maclS _s^\sigma )'(\rr d)\times
(\maclS _s^\sigma )'(\rr d)$ to
$(\maclS _{s,\sigma}^{\sigma ,s})'(\rr {2d})$ and from
$\mathbf R\times \mascS '(\rr d)\times \mascS '(\rr d)$ to
$\mascS '(\rr {2d})$.
\end{enumerate}
The same holds true for $(s,\sigma )\neq (\frac 12,\frac 12)$
when each $\maclS _s^\sigma$ and $\maclS _{s,\sigma}^{\sigma ,s}$
are replaced by $\Sigma _s^\sigma$ and
$\Sigma _{s,\sigma}^{\sigma ,s}$, respectively.
\end{prop}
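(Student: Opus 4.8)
The plan is to reduce both transforms to a single normal form assembled from three elementary operations, and then establish continuity of each operation separately. Writing $F=f\otimes \overline \phi$ for the tensor product on $\rr {2d}$, I would first record that
\[
V_\phi f=\tau \, \mascF _1(F\circ S),
\qquad
W_t(f,\phi )=\mascF _2(F\circ A_t),
\]
where $\mascF _j$ is the partial Fourier transform in the $j$-th group of $d$ variables, $\tau$ interchanges the two groups of variables, and the linear maps are $S(y,x)=(y,y-x)$ and $A_t(x,y)=(x+ty,\, x-(1-t)y)$. One checks directly that these identities reproduce \eqref{defstft} and \eqref{wignertdef2}. The point is that $\det A_t=(-1)^d\neq 0$ for every $t\in \rrr$, and that the entries of both $A_t$ and $A_t^{-1}=((1-t)u+tv,\, u-v)$ are polynomials in $t$, hence uniformly bounded for $t$ in a compact interval. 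Thus in either case $T(t,f,\phi )$ is the composite of the bilinear tensor-product map $(f,\phi )\mapsto f\otimes \overline \phi$, a linear pull-back $F\mapsto F\circ A_t$ (with $A_t$ replaced by the fixed $S$ in the STFT case), and a partial Fourier transform followed at most by the coordinate flip $\tau$.

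For part (1) I would handle these three operations in turn. The tensor product is continuous and bilinear from $\mascS (\rr d)\times \mascS (\rr d)$ to $\mascS (\rr {2d})$ and from $\maclS _s(\rr d)\times \maclS _s(\rr d)$ to $\maclS _s(\rr {2d})$, which follows by multiplying the defining seminorm estimates. The maps $\mascF _1$, $\mascF _2$ and $\tau$ are homeomorphisms on each of $\mascS (\rr {2d})$ and $\maclS _s(\rr {2d})$; for the partial Fourier transforms this is inherited from the corresponding property of $\mascF$ recalled above, applied in half of the variables. All of the $t$-dependence therefore sits in the pull-back, so the heart of the matter is to show that $(t,F)\mapsto F\circ A_t$ is jointly continuous from $\rrr \times \maclS _s(\rr {2d})$ to $\maclS _s(\rr {2d})$, and likewise for $\mascS$. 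Here I would expand $\partial ^\alpha (F\circ A_t)$ by the chain rule as a finite sum of terms $(\partial ^\beta F)\circ A_t$, $|\beta |\le |\alpha |$, with coefficients polynomial in $t$; the substitution $x=A_t^{-1}y$ together with $|A_t^{-1}y|\lesssim |y|$ uniformly for $t$ in a compact set then bounds the seminorms of $F\circ A_t$ by those of $F$, uniformly in $t$, and continuity in $t$ is obtained from the continuity of $t\mapsto A_t$ at the level of these seminorm estimates.

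For part (2) I would pass to the transposes. By the Gelfand-Shilov kernel theorem the tensor product extends to a (hypo)continuous bilinear map $\maclS _s'(\rr d)\times \maclS _s'(\rr d)\to \maclS _s'(\rr {2d})$, and the analogous statement for $\mascS '$; the nuclearity of these spaces supplies the needed joint continuity. Transposing their test-function versions from part (1), the operators $\mascF _1$, $\mascF _2$, $\tau$ and the pull-back $F\mapsto F\circ A_t$ extend to homeomorphisms on $\maclS _s'(\rr {2d})$ and $\mascS '(\rr {2d})$, with the $t$-continuity of the pull-back inherited by duality from the test-function estimate. Composing these extensions yields a continuous map on $\rrr \times \maclS _s'(\rr d)\times \maclS _s'(\rr d)$ (and on $\rrr \times \mascS '(\rr d)\times \mascS '(\rr d)$) agreeing with $T(t,\cdo ,\cdo )$ on test functions, and its uniqueness follows from the density of finite sums $f\otimes \overline \phi$, $f,\phi \in \maclS _s(\rr d)$, in $\maclS _s'(\rr {2d})$ together with continuity. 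Since the whole argument uses only $s\ge \frac 12$ and the same elementary facts hold for the projective-limit spaces, the final assertion for $\Sigma _s$, $s>\frac 12$, follows in the same way.

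The step I expect to be the main obstacle is the joint continuity of the pull-back $(t,F)\mapsto F\circ A_t$ in the Gelfand-Shilov topology. The seminorms defining $\maclS _s$ carry the weights $(\alpha !\,\beta !)^s$ together with the subexponential decay implicit in these spaces, and one must verify that composing with $A_t$ and expanding by the chain rule preserves these estimates uniformly in $t$. This is precisely where the invertibility of $A_t$ with a $t$-uniformly bounded inverse, and the polynomial dependence of $A_t$ on $t$, are indispensable.
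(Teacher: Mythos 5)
Your proposal is correct and follows essentially the same route as the paper, which factors $T(t,f,\phi )=\mascF _2\circ U_t\circ S$ with $(U_tF)(x,y)=F(x+ty,x-(1-t)y)$ and $S(f,\phi )=f\otimes \overline \phi$ --- exactly your decomposition, with your pull-back $F\circ A_t$ being the paper's $U_t$. You merely supply more detail than the paper (which asserts the continuity of the three factors as evident and treats only the Wigner case, referring the short-time Fourier transform to the literature), and your verifications of $\det A_t=(-1)^d$, the $t$-uniform bounds on $A_t^{-1}$, and the duality/density argument for part (2) are sound.
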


\par

Proposition \ref{Prop:WignerDistExt} is essentially available in
the literature (see e.{\,}g. \cite{CPRT10,Toft8}). Since in contrast
we have included the parameter $t$ as a variable, we here recall
the arguments for the $t$-Wigner distribution.

\par

\begin{proof}
We only prove (2). The other cases follow by similar arguments
and are left for the reader.

\par

By the definition we have
$$
T(t ,f,\phi) = (\mascF _2\circ U_t \circ S)(f,\phi ),
$$
where
$$
(U_t F)(x,y) = F(x+t y,x-(1-t )y)
\quad \text{and}\quad
S(f,\phi )= f\otimes \overline{\phi}.
$$
Since it is evident that $S$ is continuous from
$(\maclS _s^\sigma )'(\rr d)\times (\maclS _s^\sigma )'(\rr d)$
to $(\maclS _s^\sigma )'(\rr {2d})$,
that
$(t ,F)\mapsto U_t F$ is continuous from
$\mathbf R \times (\maclS _s^\sigma )'(\rr {2d})$ to
$(\maclS _s^\sigma )'(\rr {2d})$,
it follows from Remark \ref{Rem:PartFourGS} that
$T$ is uniquely defined and continuous from
$\mathbf R\times (\maclS _s^\sigma )'(\rr d)
\times (\maclS _s^\sigma )'(\rr d)$ to
$(\maclS _{s,\sigma}^{\sigma ,s})' (\rr {2d})$.
\end{proof}

\par

\begin{rem}\label{Rem:WignerDistExt}
By the previous proof it also follows that the mappings in
Proposition \ref{Prop:WignerDistExt} are in fact locally uniformly bounded. 
\end{rem}

\par

We also notice that if $T$ is the same as in Proposition
\ref{Prop:WignerDistExt},
then the mappings
\begin{alignat*}{2}
T\, &:\, & \mathbf R\times \mascS '(\rr d)\times \mascS (\rr d) &\to
\mascS '(\rr {2d})\bigcap C^\infty (\rr {2d}),
\\[1ex]
T\, &:\, & \mathbf R\times (\maclS _s^\sigma )'(\rr d)
\times \maclS _s^\sigma (\rr d) &\to
(\maclS _{s,\sigma}^{\sigma ,s})'(\rr {2d})\bigcap C^\infty (\rr {2d}),
\intertext{and}
T\, &:\, & \mathbf R\times (\Sigma _s^\sigma )'(\rr d)
\times \Sigma _s^\sigma(\rr d) &\to
(\Sigma _{s,\sigma}^{\sigma ,s})'(\rr {2d})\bigcap C^\infty (\rr {2d})
\end{alignat*}
are continuous (cf. \cite{AbCaTo,CPRT10,Toft8, Toft10}). 

\par

There are several ways to characterize Gelfand-Shilov
spaces and their distribution spaces. For example, they can
easily be characterized by Hermite functions and other
related functions (cf. e.{\,}g. \cite{GrPiRo,JaEi,Pil1,Pil2}). They can
also be characterized by suitable estimates of their Fourier
and Short-time Fourier transforms (cf.
\cite{ChuChuKim,GrZi,Toft8,Toft10}).

\par

\subsection{Pseudo-differential and Born-Jordan operators}

\par

Let $t \in \mathbf{R}$ be fixed. For any $a\in \mathscr {S}
(\mathbf{R}^{2d})$ (the symbol), the pseudo-differential operator
$\operatorname*{Op}_{t}(a)$ is the linear
and continuous operator on $\mathscr S(\rr d)$, defined by
\begin{equation}
\op _{t}(a)f(x)
=
(2\pi)^{-d}\iint a((1-t )x+t y,\xi)f(y)e^{i\scal {x-y} \xi}
\, dyd\xi 
\label{opta1}
\end{equation}
when $f\in \mathscr S(\rr d)$. By straight-forward computations it follows that
\begin{equation}\label{Eq:OpTauEquiv}
(\op _t (a)f,g)_{L^2(\rr d)} = (2\pi )^{-d/2}(a,W_t (g,f))_{L^2(\rr {2d})}
\end{equation}
when $g\in \mathscr S(\rr d)$.

\par

If more generally $a\in \mathscr S'(\rr {2d})$,
then $\operatorname*{Op}_{t}(a)$ is the linear and continuous operator from
$\mathscr S(\rr d)$ to $\mathscr S'(\rr d)$ such that $(\op _t (a)f,g)$ is equal to
the right-hand side of \eqref{Eq:OpTauEquiv} when $f,g\in \mathscr S(\rr d)$.
This makes sense, in view of the continuity properties for the Wigner distribution,
described above. Similar facts hold true with either $\maclS _{s_1}$ or
$\Sigma _{s_2}$ in place of $\mascS$ at each occurrence, when
$s_1\ge \frac 12$ and $s_2>\frac 12$.

\par

We recall that the Born-Jordan operator $\op _{\BJ}(a)$
with symbol $a$ is given by \eqref{Eq:PseudoBJ1}. It follows from \eqref{Eq:OpTauEquiv} that
\begin{align}
(\op _{\BJ} (a)f,g)_{L^2(\rr d)}
&=
(2\pi )^{-d/2}(a,W_{\BJ} (g,f))_{L^2(\rr {2d})}
\label{Eq:WignerBJIntForm}
\intertext{with}
W_{\BJ} (g,f) &= \int _0^1 W_t (g,f)\, dt.
\label{Eq:WignerPseudoRel}
\end{align}

\par

\subsection{Schatten-von Neumann classes and nuclear operators}\label{subsec1.4}

\par

Before giving the general definition of Schatten-von Neumann classes we
recall some facts on quasi-Banach spaces. A quasi-norm $\| \cdo \|_{\mascB}$ of order $r \in (0,1]$ on the
vector-space $\mascB$ is a nonnegative functional on $\mascB$ which satisfies
\begin{alignat}{2}
 \nm {f+g}{\mascB} &\le 2^{\frac 1r-1}(\nm {f}{\mascB} + \nm {g}{\mascB}), &
\quad f,g &\in \mascB ,
\label{Eq:WeakTriangle1}
\\[1ex]
\nm {\alpha \cdot f}{\mascB} &= |\alpha| \cdot \nm f{\mascB},
& \quad \alpha &\in \mathbf{C},
\quad  f \in \mascB
\notag
\intertext{and}
\nm f{\mascB} &= 0\quad  \Leftrightarrow \quad f=0. & &
\notag
\end{alignat}

\par

The vector space $\mascB$ is called a quasi-Banach space if it is a
complete quasi-normed space. If $\mascB$ is a quasi-Banach space
with quasi-norm satisfying the weak triangle inequality
\eqref{Eq:WeakTriangle1},
then by \cite{Aik,Rol} there is an equivalent quasi-norm to $\nm \cdo {\mascB}$
which additionally satisfies
\begin{align}\label{Eq:WeakTriangle2}
\nm {f+g}{\mascB}^r \le \nm {f}{\mascB}^r + \nm {g}{\mascB}^r, 
\quad f,g \in \mascB .
\end{align}
From now on we always assume that the quasi-norm of the quasi-Banach space $\mascB$
is chosen in such a way that both \eqref{Eq:WeakTriangle1} and \eqref{Eq:WeakTriangle2}
hold.

\par

Let $\mascB _1$ and $\mascB _2$ be (quasi-)Banach spaces and let $T$ be a
linear operator from $\mascB  _1$ to $\mascB _2$. The singular value of
$T$ of order $j\ge 1$ is defined as
$$
\sigma _j(T) = \sigma _j(T;\mascB _1 , \mascB _2 )
\equiv \inf \nm {T-T_0}{\mascB _1\to \mascB _2},
$$
where the infimum is taken over all linear operators $T_0$ from $\mascB _1$ to
$\mascB _2$ of rank at most $j-1$. (Cf. e.{\,}g. \cite{Si,BS,Toft}.)
The operator $T$ is said to be a Schatten-von Neumann operator of order
$p\in (0,\infty ]$ if
\begin{equation}\label{SchattenNormBanach}
\nm T{\mascI _p(\mascB _1,\mascB _2)} \equiv \nm {\{Ê\sigma _j(T)Ê\}Ê_{j\ge 1}}{\ell ^p}
\end{equation}
is finite. The set of Schatten-von Neumann operators from $\mascB _1$ to $\mascB _2$
of order $p\in (0,\infty ]$ is denoted by $\mascI _p(\mascB _1,\mascB_2)$.
We observe that $\mascI _p(\mascB _1,\mascB _2)$ is contained in
$\maclK (\mascB _1,\mascB _2)$, the set of compact operators from $\mascB _1$
to $\mascB _2$, when $p<\infty$. Furthermore, $\mascI _\infty (\mascB _1,\mascB _2)$
agrees with $\maclB (\mascB _1,\mascB _2)$, the set of linear bounded operators
from $\mascB _1$ to $\mascB _2$. For conveniency we set $\mascI _\infty (\mascB ,\mascB )
=\mascI _\infty (\mascB )$.

\par

Next we define nuclear operators. Let $\mascB _0$ be a Banach space with dual
$\mascB _0'$, $\mascB$ be a quasi-Banach space,
$r\in (0,1]$ and let $T$ be a linear and continuous operator from $\mascB _0$
to $\mascB$. Then $T$ is called \emph{$r$-nuclear} from $\mascB _0$ to
$\mascB$, if there are sequences $\{ \ep _j\} _{j=1}^\infty \subseteq \mascB _0'$
and $\{ e_j\} _{j=1}^\infty \subseteq \mascB$ such that
\begin{align}
&T = \sum _{j=1}^\infty e_j\otimes \ep _j\label{Eq:rNuclDef}
\intertext{with convergence in $\maclB (\mascB _0,\mascB )$, and}
&
\sum _{j=1}^\infty \nm {\ep _j}{\mascB _0'}^r\nm {e_j}{\mascB}^r<\infty .
\label{Eq:NuclQNormEst}
\end{align}
Here $T$ in \eqref{Eq:rNuclDef} should be interpreted as the operator
$$
Tf = \sum _{j=1}^\infty \scal f{\ep _j}e_j,\qquad f\in \mascB _0,
$$
which is well-defined when \eqref{Eq:NuclQNormEst} holds.
The set of $r$-nuclear operators from $\mascB _0$ to
$\mascB$ is denoted by $\mascN _r(\mascB _0,\mascB )$, and we equip
this set by the quasi-norm
$$
\nm T{\mascN _r(\mascB _0,\mascB )} \equiv \inf 
\left (
\sum _{j=1}^\infty \nm {\ep _j}{\mascB _0'}^r\nm {e_j}{\mascB}^r
\right ) ^{\frac 1r},
$$
where the infimum is taken over all representatives $\{ \ep _j\} _{j=1}^\infty \subseteq \mascB _0'$
and $\{ e_j\} _{j=1}^\infty \subseteq \mascB$ such that \eqref{Eq:rNuclDef} and
\eqref{Eq:NuclQNormEst} hold true.

\par

Later on we need the following result which shows that $p$-nuclearity is
stable under linear continuous mappings. Here and in what follows we write
$g\lesssim h$ when $g(\theta )\le ch(\theta )$
for some constant $c>0$ which is independent of $\theta$ in
the domains of $g$ and $h$. We also let
$g\asymp h$ when $g\lesssim h$ and $h\lesssim g$.

\par

\begin{prop}\label{Prop:NuclQBanach}
Let $p \in (0,\infty ]$, $r\in (0,1]$, $\mascB _k$ be quasi-Banach spaces of order
$p$, $\mascB _{0,k}$ be Banach spaces, $k=1,2$, and let
$$
T_1\, :\, \mascB _{0,2}\to \mascB _{0,1}
\quadÊ\text{and}\quad
T_2\, :\, \mascB _1\to \mascB _2
$$
be continuous. Then the following is true:
\begin{enumerate}
\item if $T\in \mascI _p(\mascB _{0,1},\mascB _1)$, then
$T_2\circ T\circ T_1\in \mascI _p(\mascB _{0,2},\mascB _2)$,
and
\begin{equation}\label{Eq:SchattComp}
\nm {T_2\circ T\circ T_1}{\mascI _p(\mascB _{0,2},\mascB _2)}
\lesssim \nm {T_1}{\maclB (\mascB _{0,2},\mascB _{0,1})}\nm {T_2}{\maclB (\mascB _1,\mascB _2)}
\nm T{ \mascI _p(\mascB _{0,1},\mascB _1)}\text ;
\end{equation}

\vrum

\item if $T\in \mascN _r(\mascB _{0,1},\mascB _1)$, then
$T_2\circ T\circ T_1\in \mascN _r(\mascB _{0,2},\mascB _2)$,
and
\begin{equation}\label{Eq:NuclComp}
\nm {T_2\circ T\circ T_1}{\mascN _r(\mascB _{0,2},\mascB _2)}
\le \nm {T_1}{\maclB (\mascB _{0,2},\mascB _{0,1})}\nm {T_2}{\maclB (\mascB _1,\mascB _2)}
\nm T{ \mascN _r(\mascB _{0,1},\mascB _1)}\text ;
\end{equation}

\vrum

\item if in addition $\mascB _{0,1}$ and $\mascB _{1}$ are Hilbert spaces,
then $\mascN _r(\mascB _{0,1},\mascB _{1}) =
\mascI _r(\mascB _{0,1},\mascB _{1})$, with equality in quasi-norms.
\end{enumerate}
\end{prop}

\par

Proposition \ref{Prop:NuclQBanach} is well-known in the literature (cf. \cite{BS, Si, Toft21} and
the references therein).

\par

\subsection{Modulation spaces}

\par

Next we discuss basic properties for modulation spaces, and start by
recalling the conditions for the involved weight functions. A function $\omega$
on $\rr d$ is called a \emph{weight} (on $\rr d$), if $\omega >0$ and
$\omega ,\omega ^{-1}\in L^\infty _{loc}(\rr d)$. Let $\omega$ and $v$
be weights on $\rr d$. Then $\omega$ is called \emph{moderate} or
\emph{$v$-moderate} if
\begin{equation}\label{moderate}
\omega (x+y) \lesssim \omega (x)v(y),\quad x,y\in \rr d.
\end{equation}
The weight $v$ is called \emph{submultiplicative},
if $v$ is even and \eqref{moderate} holds when
$\omega =v$. We note that if \eqref{moderate} holds, then
$$
v(-x)^{-1}\lesssim \omega (x) \lesssim v(x).
$$
Furthermore, for such $\omega$ it follows that \eqref{moderate} is true when
$$
v(x) =Ce^{r|x|},
$$
for some positive constants $r$ and $C$ (cf. e.{\,}g. \cite{Gc2.5}).

\par

The set of all moderate functions on $\rr d$
is denoted by $\mascP _E(\rr d)$.

\par

Let $\omega \in \mascP _E (\rr {2d})$ and $p,q\in (0,\infty ]$
be fixed. Then the mixed
Lebesgue space $L^{p,q}_{(\omega )}(\rr {2d})$ consists of
all measurable functions $F$ on $\rr {2d}$ such that
$\nm F{L^{p,q}_{(\omega )}}<\infty$. Here
\begin{equation}\label{Lpq1norm}
\nm F{L^{p,q}_{(\omega )}} \equiv \nm {F_{p,\omega}}{L^q},
\quad \text{where} \quad
F_{p,\omega}(\xi ) \equiv \nm {F(\cdo ,\xi )\omega (\cdo ,\xi )}{L^p}.
\end{equation}
We note that these quasi-norms might attain $+\infty$.

\par

Let $\phi \in \maclS _{1/2} (\rr d)\setminus 0$ be fixed.
The \emph{modulation space} $M^{p,q}_{(\omega )}(\rr d)$
is the space which consist of
all $f\in \maclS _{1/2} '(\rr d)$ such that $\nm f{M^{p,q}_{(\omega
)}}<\infty$, where
\begin{equation}\label{modnorm}
\nm f{M^{p,q}_{(\omega )}}\equiv \nm {V_\phi f}{L^{p,q}_{(\omega
)}}.
\end{equation}

\par

For convenience we set $M^p _{(\omega )}= M^{p,p}_{(\omega
)}$. Furthermore we set $M^{p,q}=M^{p,q}_{(\omega )}$ when
$\omega \equiv 1$.

\par

The proof of the following proposition is omitted, since the results
can be found in \cite {Fe4,FG1,FG3,GaSa,Gc2}. Here, if
$p\in[1,\infty]$, then $p'\in[1,\infty]$ is the conjugate exponent of $p$.
That is, $p$ and $p'$ should satisfy $\frac 1p+\frac 1{p'}=1$.

\par

\begin{prop}\label{p1.4}
Let $p,q,p_j,q_j\in (0,\infty ]$ for $j=1,2$, $r\le \min (p,q,1)$,
and $\omega ,\omega _1,\omega _2,v\in \mascP _E (\rr {2d})$
be such that $v$ is submultiplicative, $\omega$ is $v$-moderate and
$\omega _2\lesssim \omega _1$. Then the following is true:
\begin{enumerate}
\item[(1)] $f\in M^{p,q}_{(\omega )}(\rr d)$ if and only if
\eqref{modnorm} holds for any $\phi \in M^r_{(v)}(\rr d)\setminus
0$. Moreover, $M^{p,q}_{(\omega )}$ is a quasi-Banach space under
the quasi-norm
in \eqref{modnorm} and different choices of $\phi$ give rise to
equivalent quasi-norms. Furthermore, if $p,q\ge 1$, then $M^{p,q}_{(\omega )}$
is a Banach space;

\vrum

\item[(2)] if  $p_1\le p_2$ and $q_1\le q_2$  then
$$
\Sigma _1 (\rr d)\hookrightarrow M^{p_1,q_1}_{(\omega _1)}(\rr
d)\hookrightarrow M^{p_2,q_2}_{(\omega _2)}(\rr d)\hookrightarrow
\Sigma _1 '(\rr d)\text ;
$$

\vrum

\item[(3)] if in addition $p,q\ge 1$, then the $L^2$ product
$( \cdo ,\cdo )_{L^2}$ on $\maclS _{1/2}(\rr d)$
extends uniquely  to a continuous map from $M^{p,q}_{(\omega )}(\rr
n)\times M^{p'\! ,q'}_{(1/\omega )}(\rr d)$ to $\mathbf C$. On the
other hand, if $\nmm a = \sup \abp {(a,b)}$, where the supremum is
taken over all $b\in \maclS _{1/2} (\rr d)$ such that
$\nm b{M^{p',q'}_{(1/\omega )}}\le 1$, then $\nmm {\cdot}$ and $\nm
\cdot {M^{p,q}_{(\omega )}}$ are equivalent norms;

\vrum

\item[(4)] if $p,q<\infty$, then $\maclS _{1/2} (\rr d)$ is dense in
$M^{p,q}_{(\omega )}(\rr d)$. If in addition $p,q\ge 1$, then
the dual space of $M^{p,q}_{(\omega )}(\rr d)$ can be identified
with $M^{p'\! ,q'}_{(1/\omega )}(\rr d)$, through the $L^2$-form
$(\cdo  ,\cdo )_{L^2}$. Moreover, $\maclS _{1/2} (\rr d)$ is weakly dense
in $M^{p' ,q'}_{(\omega )}(\rr d)$ with respect to the $L^2$-form.
\end{enumerate}
\end{prop}

\par

\begin{rem}\label{remGSmodident}
By Theorem 3.9 in \cite{Toft8} it follows that Gelfand-Shilov spaces
and their distribution spaces can be obtained by suitable unions and
intersections of modulation spaces. In particular we have
$$
\bigcap _{\omega \in \mascP _E}M^{p,q}_{(\omega )}(\rr d) = \Sigma _1(\rr d),
\quad
\bigcup _{\omega \in \mascP _E}M^{p,q}_{(\omega )}(\rr d) = \Sigma _1'(\rr d).
$$
\end{rem}

\par

\section{Born-Jordan operators with distribution symbols}\label{sec2}

\par

In this section we deduce various kinds of mapping properties of $\op _{\BJ}(a)$
when $a$ belongs to suitable test-function or distribution spaces. In particular we
show that $\op _{\BJ}(a)$ makes sense as a continuous operator from
$\mascS (\rr d)$ to $\mascS '(\rr d)$ when $a\in \mascS '(\rr {2d})$.

\par

We begin with the following analogy of Proposition \ref{Prop:WignerDistExt}
in Born-Jordan situation.

\par

\begin{prop}\label{Prop:WignerBJDistExt}
Let $s,\sigma >0$ be such that $s+\sigma \ge 1$ and let $T(f,\phi )
\equiv W_{\BJ} (f,\phi )$ when $f,\phi \in \mascS (\rr d)$. Then the following
is true:
\begin{enumerate}
\item the map $(f,\phi)\mapsto T(f,\phi )$ is continuous from
$\mascS (\rr d)\times \mascS (\rr d)$ to
$\mascS (\rr {2d})$ and restricts to a continuous map from
$\maclS _s^\sigma (\rr d)\times \maclS _s^\sigma
(\rr d)$ to $\maclS _{s,\sigma}^{\sigma ,s}(\rr {2d})$;


\item the map $(f,\phi)\mapsto T(f,\phi )$ from
$\maclS _s^\sigma (\rr d)\times \maclS _s^\sigma
(\rr d)$ to $\maclS _{s,\sigma}^{\sigma ,s}(\rr {2d})$ extends
uniquely to a continuous map
from $(\maclS _s^\sigma )'(\rr d)\times
(\maclS _s^\sigma )'(\rr d)$ to
$(\maclS _{s,\sigma}^{\sigma ,s})'(\rr {2d})$ and from
$\mascS '(\rr d)\times \mascS '(\rr d)$ to
$\mascS '(\rr {2d})$.
\end{enumerate}
The same holds true for $(s,\sigma )\neq (\frac 12,\frac 12)$
when each $\maclS _s^\sigma$ and $\maclS _{s,\sigma}^{\sigma ,s}$
are replaced by $\Sigma _s^\sigma$ and
$\Sigma _{s,\sigma}^{\sigma ,s}$, respectively.
\end{prop}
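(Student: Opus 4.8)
The plan is to reduce the whole statement to the $t$-parametrized Proposition~\ref{Prop:WignerDistExt}, using the representation
$$
W_{\BJ}(f,\phi ) = \int _0^1 W_t (f,\phi )\, dt .
$$
I would work with this integral rather than with the equivalent convolution $\Phi *W(f,\phi )$, since $\Phi$ fails to belong to $\mascS (\rr {2d})$ and the mapping properties of convolution by $\Phi$ are less transparent than those of an average over the compact parameter set $[0,1]$. The essential mechanism is that, by Proposition~\ref{Prop:WignerDistExt}, $t\mapsto W_t(f,\phi )$ is a continuous curve into the relevant target space, that this curve is parametrized by the compact interval $[0,1]$, and that by Remark~\ref{Rem:WignerDistExt} it is locally uniformly bounded in $(f,\phi )$. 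Integrating such a curve over $[0,1]$ then produces an element of the same space and preserves continuity in $(f,\phi )$, which is precisely what the proposition asserts.

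First I would treat the test-function part (1) in the Fr{\'e}chet cases $\mascS$ and $\Sigma _s$. Fix $f,\phi \in \mascS (\rr d)$. By Proposition~\ref{Prop:WignerDistExt} the map $t\mapsto W_t(f,\phi )$ is continuous from $[0,1]$ into $\mascS (\rr {2d})$, so the Riemann integral exists in the complete space $\mascS (\rr {2d})$ and equals $W_{\BJ}(f,\phi )$. For every continuous seminorm $p$ on $\mascS (\rr {2d})$,
$$
p\big ( W_{\BJ}(f,\phi )\big ) \le \int _0^1 p\big ( W_t(f,\phi )\big )\, dt
\le \sup _{t\in [0,1]} p\big ( W_t(f,\phi )\big ),
$$
and by Remark~\ref{Rem:WignerDistExt} the last expression is dominated by a product of continuous seminorms of $f$ and of $\phi$, uniformly in $t$. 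This gives both the membership $W_{\BJ}(f,\phi )\in \mascS (\rr {2d})$ and the asserted continuity. The projective-limit space $\Sigma _s(\rr d)$ is handled verbatim, with $p$ ranging over the seminorms $\nm \cdo {\mathcal S_{s,h}}$.

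The one place where a little care is needed is the inductive-limit space $\maclS _s(\rr d)=\bigcup _{h>0}\mathcal S_{s,h}(\rr d)$, which is not Fr{\'e}chet. Here the continuous image $\{ W_t(f,\phi )\, ;\, t\in [0,1]\}$ is a compact subset of $\maclS _s(\rr {2d})$, and since this inductive limit is regular, a compact set is contained and bounded in a single Banach step $\mathcal S_{s,h}(\rr {2d})$. I would form the integral in that Banach space; the seminorm estimate above with $p=\nm \cdo {\mathcal S_{s,h}}$ then gives $W_{\BJ}(f,\phi )\in \mathcal S_{s,h}(\rr {2d})\subseteq \maclS _s(\rr {2d})$ together with continuity. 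I expect this step --- the passage from the abstract vector-valued integral to a concrete Banach step --- to be the main (and essentially the only nonroutine) obstacle; everything else is a direct transcription of Proposition~\ref{Prop:WignerDistExt}.

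Finally, I would obtain part (2) by applying the same scheme to the distribution-valued part of Proposition~\ref{Prop:WignerDistExt}. For fixed $f,\phi$ in $\maclS _s'(\rr d)$ (resp. $\mascS '(\rr d)$), the curve $t\mapsto W_t(f,\phi )$ is continuous from $[0,1]$ into the complete space $\maclS _s'(\rr {2d})$ (resp. $\mascS '(\rr {2d})$), so $\int _0^1 W_t(f,\phi )\, dt$ exists there and defines $W_{\BJ}(f,\phi )$; the same seminorm bound, together with Remark~\ref{Rem:WignerDistExt}, yields continuity in $(f,\phi )$, and for the inductive-limit target $\Sigma _s'$ one again routes through a single step as in the previous paragraph. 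Uniqueness of the extension is then immediate from the density of $\maclS _s(\rr d)\times \maclS _s(\rr d)$ in $\maclS _s'(\rr d)\times \maclS _s'(\rr d)$ (resp. of $\mascS \times \mascS$ in $\mascS '\times \mascS '$) together with the continuity just established.
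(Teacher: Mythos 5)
Your proposal is correct and follows essentially the same route as the paper: both define $W_{\BJ}(f,\phi)=\int_0^1 W_t(f,\phi)\,dt$ and derive all continuity assertions from Proposition~\ref{Prop:WignerDistExt} together with the local uniform boundedness in Remark~\ref{Rem:WignerDistExt}, with uniqueness obtained by approximating $f,\phi$ by test functions and using the (locally uniform in $t$) convergence $W_t(f_\ep,g_\ep)\to W_t(f,g)$. Your treatment merely makes explicit some vector-valued integration details (Riemann integration in complete spaces, routing through a single Banach step $\mathcal S_{s,h}$ via regularity of the inductive limit) that the paper leaves implicit.
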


\par

\begin{proof}
We use the same notations as in the proof of Proposition \ref{Prop:WignerDistExt}.
We recall that $W_{BJ} (f,g)$ is given by \eqref{Eq:WignerBJIntForm}
when $f,g\in \mascS (\rr d)$. By Proposition \ref{Prop:WignerDistExt}
and Remark \ref{Rem:WignerDistExt} it follows that the map
$(f,\phi )\mapsto W_{BJ} (f,g)$ is continuous from
$\mascS (\rr d)\times \mascS (\rr d)$
to $\mascS (\rr {2d})$, and that the same holds true if each
$\mascS$ is replaced by $\maclS _s$ or by $\Sigma _s$.

\par

In the same way, Proposition \ref{Prop:WignerDistExt}
and Remark \ref{Rem:WignerDistExt} show that the map
$$
(f,g)\mapsto \int _0^1W_t (f,g)\, dt
$$
is well-defined and continuous from $\mascS '(\rr d)\times \mascS '(\rr d)$
to $\mascS '(\rr {2d})$, and that the same holds true after each
$\mascS (\rr d)$, $\mascS (\rr {2d})$ and their duals are replaced by
$\maclS _s^\sigma (\rr d)$, $\maclS _{s,\sigma}^{\sigma ,s}(\rr {2d})$
and their duals, or by 
$\Sigma _s^\sigma (\rr d)$, $\Sigma _{s,\sigma}^{\sigma ,s}(\rr {2d})$
and their duals (cf. \cite{AbCaTo}). Hence, by letting
$W_{\BJ}(f,g)$ be defined by
the right-hand side of \eqref{Eq:WignerBJIntForm} for such $f$ and $g$, the asserted
continuity of the extensions of the map $T$ follows.

\par

It remains to show the asserted uniqueness of the extensions of $T$
and we only prove the uniqueness when
$f,g\in (\maclS _s^\sigma )'(\rr d)$. The cases when
$f,g\in (\Sigma _s^\sigma )'(\rr d)$ or $f,g\in \mascS '(\rr d)$
follow by similar arguments and are left for the reader.
Let $f,g\in (\maclS _s^\sigma )'(\rr d)$. By Proposition
\ref{Prop:WignerDistExt} and its proof, and Remark
\ref{Rem:WignerDistExt}, it follows that if
$$
\{ f_\ep \} _{\ep >0} \subseteq (\maclS _s^\sigma )'(\rr d)
\quad \text{and}\quad
\{ g_\ep \} _{\ep >0} \subseteq (\maclS _s^\sigma )'(\rr d),
$$
are such that
$$
\lim _{\ep \to 0+}f_\ep = f
\quad \text{and}\quad
\lim _{\ep \to 0+}g_\ep = g
$$
with convergence in $(\maclS _s^\sigma )'(\rr d)$, then 
$$
\lim _{\ep \to 0+}W_t (f_\ep ,g_\ep ) = W_t (f,g)
$$
in $(\maclS _s^\sigma )'(\rr {2d})$, locally uniformly with respect
to $t$. Hence
$$
\lim _{\ep \to 0+}W_{\BJ} (f_\ep ,g_\ep ) = W_{\BJ} (f,g)
$$
with convergence in $(\maclS _s^\sigma )'(\rr {2d})$.
Hence, if $\Phi \in \maclS _{s,\sigma}^{\sigma ,s}(\rr {2d})$,
then Fubini's theorem gives
\begin{multline*}
\lim _{\ep \to 0+}\scal {W_{\BJ}(f_\ep ,g_\ep)}\Phi
=
\lim _{\ep \to 0+}\Scal {\int _0^1 (\mascF _2\circ U_t )
(f_\ep \otimes \overline{g_\ep})\, dt}\Phi
\\[1ex]
\lim _{\ep \to 0+}\Scal {
f_\ep \otimes \overline{g_\ep}}{\int _0^1 (U_t ^{-1}
\circ \mascF _2 ^{-1})\Phi \, dt}
=
\Scal {
f\otimes \overline g }{\int _0^1 (U_t ^{-1}\circ \mascF _2 ^{-1})\Phi \, dt},
\end{multline*}
where the last equality follows from the fact that
$$
\Phi \mapsto \int _0^1 (U_t ^{-1}\circ \mascF _2 ^{-1})\Phi \, dt
$$
is continuous from $\maclS _{s,\sigma}^{\sigma ,s}(\rr {2d})$
to $\maclS _{s}^{\sigma}(\rr {2d})$, in view of Proposition
\ref{Prop:WignerDistExt} and its proof.
The uniqueness assertions now follow from the facts that
we may choose $f_\ep$ and $g_\ep$ in $\maclS _s^\sigma (\rr d)$.
This gives the result.
\end{proof}

\par

We have now the following. Here $\mathscr L(V_1,V_2)$ is the set of all
linear and continuous mappings from the topological vector space $V_1$
into the topological vector space $V_2$.

\par

\begin{thm}\label{Thm:BJopsDistrMap}
Let $s,\sigma >0$ be such that $s+\sigma \ge 1$.
Then the following is true:
\begin{enumerate}
\item if $a\in \maclS _{s,\sigma}^{\sigma ,s}(\rr {2d})$, then
$\op _{\BJ}(a)$ from $\maclS _s^\sigma (\rr d)$ to
$(\maclS _s^\sigma )'(\rr d)$ is uniquely extendable
to a continuous map from $(\maclS _s^\sigma )'(\rr d)$ to
$\maclS _s^\sigma (\rr d)$;

\vrum

\item the map $a\mapsto \op _{\BJ}(a)$ from
$\maclS _{s,\sigma}^{\sigma ,s}(\rr {2d})$
to $\mathscr L(\maclS _s^\sigma (\rr d),
(\maclS _s^\sigma )'(\rr d))$ is uniquely extendable
from $(\maclS _{s,\sigma}^{\sigma ,s})'(\rr {2d})$ to
$\mathscr L(\maclS _s^\sigma (\rr d), (\maclS _s^\sigma )'(\rr d))$.
\end{enumerate}
The same holds true if each $\maclS _s^\sigma$,
$\maclS _{s,\sigma}^{\sigma ,s}$ and their duals are replaced by
$\Sigma _s^\sigma$ and $\Sigma _{s,\sigma}^{\sigma ,s}$
and their duals, or by $\mascS$ and its dual.
\end{thm}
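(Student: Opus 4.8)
The plan is to build both assertions on the relation
\begin{equation*}
(\op _{\BJ}(a)f,g)_{L^2(\rr d)} = (2\pi )^{-d/2}(a,W_{\BJ}(g,f))_{L^2(\rr {2d})},
\end{equation*}
together with the continuity and extension properties of $W_{\BJ}$ established in Proposition \ref{Prop:WignerBJDistExt}. I carry out the argument for the scale $\maclS _s$, and note that the cases $\Sigma _s$ (with $s>\frac 12$) and $\mascS$ are handled by the same reasoning, since Proposition \ref{Prop:WignerBJDistExt} covers all three and since each of the spaces $\maclS _s$, $\Sigma _s$, $\mascS$ is reflexive and nuclear, with the test-function space dense in its dual.

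For assertion (1), fix $a\in \maclS _s(\rr {2d})$. By Proposition \ref{Prop:WignerBJDistExt} the bilinear map $(f,g)\mapsto W_{\BJ}(g,f)$ is continuous from $\maclS _s'(\rr d)\times \maclS _s'(\rr d)$ into $\maclS _s'(\rr {2d})$, so the pairing $(a,W_{\BJ}(g,f))$ of the test function $a\in \maclS _s(\rr {2d})$ against the distribution $W_{\BJ}(g,f)\in \maclS _s'(\rr {2d})$ is well defined for all $f,g\in \maclS _s'(\rr d)$. For fixed $f$, the functional $g\mapsto (2\pi )^{-d/2}(a,W_{\BJ}(g,f))$ is continuous on $\maclS _s'(\rr d)$, hence by reflexivity it is represented by a unique element of $\maclS _s(\rr d)$, which I take to be $\op _{\BJ}(a)f$. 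Thus $\op _{\BJ}(a)$ maps $\maclS _s'(\rr d)$ into $\maclS _s(\rr d)$. Continuity of $f\mapsto \op _{\BJ}(a)f$ from $\maclS _s'(\rr d)$ into $\maclS _s(\rr d)$ follows from the joint continuity of $(f,g)\mapsto (a,W_{\BJ}(g,f))$ and the kernel theorem for the nuclear spaces involved, which identifies such bilinear forms with continuous operators $\maclS _s'(\rr d)\to \maclS _s(\rr d)$; concretely, the distribution kernel of $\op _{\BJ}(a)$ lies in $\maclS _s(\rr {2d})$, being obtained from $a$ by a partial Fourier transform, a linear substitution and integration in $t\in [0,1]$, and integral operators with kernel in $\maclS _s(\rr {2d})$ map $\maclS _s'(\rr d)$ continuously into $\maclS _s(\rr d)$. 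On $\maclS _s(\rr d)$ this map agrees with the original $\op _{\BJ}(a)$ by the defining relation, and since $\maclS _s(\rr d)$ is dense in $\maclS _s'(\rr d)$ the extension is unique.

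For assertion (2), fix instead $a\in \maclS _s'(\rr {2d})$. Now Proposition \ref{Prop:WignerBJDistExt}(1) gives $W_{\BJ}(g,f)\in \maclS _s(\rr {2d})$ whenever $f,g\in \maclS _s(\rr d)$, so $(a,W_{\BJ}(g,f))$ is again well defined, this time as the pairing of the distribution $a$ with the test function $W_{\BJ}(g,f)$. For fixed $f\in \maclS _s(\rr d)$ the functional $g\mapsto (2\pi )^{-d/2}(a,W_{\BJ}(g,f))$ is continuous on $\maclS _s(\rr d)$, so the relation defines $\op _{\BJ}(a)f\in \maclS _s'(\rr d)$, and $f\mapsto \op _{\BJ}(a)f$ is continuous; hence $\op _{\BJ}(a)\in \mathscr L(\maclS _s(\rr d),\maclS _s'(\rr d))$. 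Since $W_{\BJ}(g,f)$ depends continuously on $(f,g)$ with values in $\maclS _s(\rr {2d})$, the map $a\mapsto \op _{\BJ}(a)$ is continuous from $\maclS _s'(\rr {2d})$ into $\mathscr L(\maclS _s(\rr d),\maclS _s'(\rr d))$; it coincides with the original map on $\maclS _s(\rr {2d})$, and uniqueness of the extension again follows from the density of $\maclS _s(\rr {2d})$ in $\maclS _s'(\rr {2d})$.

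The step I expect to demand the most care is the passage from weak (separate) to genuine operator continuity in assertion (1), that is, justifying via nuclearity and the kernel theorem that the extension really lands \emph{continuously} in $\maclS _s(\rr d)$ rather than merely pointwise; verifying that the kernel of $\op _{\BJ}(a)$ does inherit the Gelfand-Shilov decay and regularity after the $t$-integration is the concrete form of this difficulty. The remaining point is bookkeeping: checking that reflexivity, nuclearity, density of the test space in its dual, and the joint continuity supplied by Proposition \ref{Prop:WignerBJDistExt} and Remark \ref{Rem:WignerDistExt} all hold uniformly across the three scales $\maclS _s$, $\Sigma _s$ (for $s>\frac 12$) and $\mascS$.
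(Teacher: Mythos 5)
Your proof is correct and follows essentially the same route as the paper: both define $\op _{\BJ}(a)f$ through the relation $(\op _{\BJ}(a)f,g)=(2\pi )^{-d/2}(a,W_{\BJ}(g,f))$, invoke the extension and continuity properties of $W_{\BJ}$ from Proposition \ref{Prop:WignerBJDistExt}, and obtain uniqueness from density of the test space in its dual via approximation. Your reflexivity identification and the kernel-regularity argument (kernel in $\maclS _s(\rr {2d})$ after partial Fourier transform, linear substitution and $t$-integration) merely supply the details behind the paper's terse ``the asserted continuity follows,'' and they check out.
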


\par

\begin{proof}
We only prove the assertion for symbols in
$\maclS _{s,\sigma}^{\sigma ,s}(\rr {2d})$ and
$(\maclS _{s,\sigma}^{\sigma ,s})'(\rr {2d})$.
The other cases follow by similar arguments and are left
for the reader.

\par

By Proposition \ref{Prop:WignerBJDistExt} it follows that the map
$$
(a,f,g)\mapsto (2\pi )^{-d/2}(a,W_{\BJ} (g,f))_{L^2(\rr {2d})}
$$
from
$\maclS _{s,\sigma}^{\sigma ,s}(\rr {2d})\times \maclS _s^\sigma
(\rr d)\times \maclS _s^\sigma (\rr d)$ to $\mathbf C$
extends uniquely to continuous mappings from
$\maclS _{s,\sigma}^{\sigma ,s}(\rr {2d})\times
(\maclS _s^\sigma )'(\rr d)\times (\maclS _s^\sigma )'(\rr d)$
to $\mathbf C$,
and from
$(\maclS _{s,\sigma}^{\sigma ,s})'(\rr {2d})
\times \maclS _s^\sigma (\rr d)\times \maclS _s^\sigma (\rr d)$
to $\mathbf C$.
Hence, by letting $\op _{BJ}(a)f$ be defined by \eqref{Eq:WignerPseudoRel},
the asserted continuity follows. The uniqueness of these extensions follows by
similar arguments to those of Proposition \ref{Prop:WignerBJDistExt}. The details
are left to the reader.
\end{proof}

\par

We may now complete the previous result with the following.

\par

\begin{thm}\label{BJGSSymbols}
Let $s,\sigma >0$ be such that $s+\sigma \ge 1$, and let
$t\in \mathbf R$. Then the following is true:
\begin{enumerate}
\item if $a\in (\maclS _{s,\sigma}^{\sigma ,s})'(\rr {2d})$, then $\op _{\BJ}(a)=\op _t(b)$, for some $b\in (\maclS _{s,\sigma}^{\sigma ,s})'(\rr {2d})$;

\vrum

\item if $a\in \maclS _{s,\sigma}^{\sigma ,s}(\rr {2d})$, then
$\op _{\BJ}(a)=\op _t(b)$, for some $b\in \maclS _{s,\sigma}^{\sigma ,s}
(\rr {2d})$.
\end{enumerate}

\par

The same holds true with $\mascS$ or $\Sigma _{s,\sigma}^{\sigma ,s}$
in place of $\maclS _{s,\sigma}^{\sigma ,s}$ at each occurrence.
\end{thm}

\par

\begin{proof}
We only prove the assertion for $\maclS _{s,\sigma}^{\sigma ,s}$
and $(\maclS _{s,\sigma}^{\sigma ,s})'$.
The other cases follow by similar arguments and are left for the reader.

\par

By \cite[Theorem 3.6]{AbCaTo} it suffices to prove the result in
the Weyl case $t=\frac 12$.
Let $\Omega _1$ and $\Omega _2$ be bounded sets in
$\maclS _{s,\sigma}^{\sigma ,s}(\rr {2d})$ and
in $(\maclS _{s,\sigma}^{\sigma ,s})'(\rr {2d})$, respectively,
and let $I\subseteq \mathbf R$ be a bounded
interval. Then the map $(t,a)\mapsto e^{it\scal {D_\xi}{d_x}}a$ is uniformly
continuous from $I\times \Omega _1$ to
$\maclS _{s,\sigma}^{\sigma ,s}(\rr {2d})$ and from
$I\times \Omega _2$ to
$(\maclS _{s,\sigma}^{\sigma ,s})'(\rr {2d})$, in view of
\cite[Theorem 3.6]{AbCaTo} and its proof. Hence
$$
a_{\BJ} \equiv \int _0^1e^{i(\frac 12-t)\scal {D_\xi}{D_x}}a\, dt
$$
belongs to $\maclS _{s,\sigma}^{\sigma ,s}(\rr {2d})$ respective
$(\maclS _{s,\sigma}^{\sigma ,s})'(\rr {2d})$ when $a$ does.
The result now follows from \eqref{Eq:PseudoBJ1},
\eqref{Eq:CalculTransfer} and the uniqueness assertions
in Theorem \ref{Thm:BJopsDistrMap}.
\end{proof}

\par

\section{Born-Jordan operators with modulation space symbols}\label{sec3}

\par

In this section we deduce that any Born-Jordan operator with symbol in
the modulation space $M^{p,q}_{(\omega )}(\rr {2d})$ is a pseudo-differential operator
with symbol in $M^{p,q}_{(\omega )}(\rr {2d})$, when $\omega (x,\xi ,\eta ,y)
=\omega _0(\eta ,y)$. We also deduce continuity,
Schatten-von Neumann and nuclearity properties for such operators. 

\par

We begin with the following.

\par

\begin{thm}\label{BJModSymbols}
Let $p,q\in (0,\infty ]$, $t \in \mathbf R$, $\omega \in
\mascP _E(\rr {4d})$
be such that $\omega (x,\xi ,\eta ,y)=\omega _0(\eta ,y)$ for some
$\omega _0\in \mascP _E(\rr {2d})$, and let
$a\in M^{p,q}_{(\omega )}(\rr {2d})$. Then $\opBJ (a) =\opp {t } (b)$
for some $b\in M^{p,q}_{(\omega )}(\rr {2d})$.
\end{thm}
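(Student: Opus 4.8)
The plan is to turn the operator identity into an explicit Fourier multiplier acting on the symbol, and then to show that this multiplier preserves $M^{p,q}_{(\omega )}(\rr {2d})$. From $\opBJ (a)=\int _0^1\opp \tau (a)\,d\tau$ together with the elementary transfer identity between two Shubin quantizations, which a direct computation from the defining formula \eqref{opta1} yields in the form $\opp \tau (a)=\opp t (e^{i(t-\tau )\scal {D_x}{D_\xi}}a)$, one is led to the candidate
\begin{equation*}
b=\int _0^1 e^{i(t-\tau )\scal {D_x}{D_\xi}}a\,d\tau =\Theta (D)a,\qquad \widehat b=\Theta \,\widehat a,
\end{equation*}
where $\Theta (D)$ is the Fourier multiplier on $\rr {2d}$ with symbol
\begin{equation*}
\Theta (\eta ,y)=\int _0^1 e^{i(t-\tau )\scal \eta y}\,d\tau =\sinc \Big(\tfrac {\scal \eta y}2\Big)\,e^{i(t-\frac 12)\scal \eta y}.
\end{equation*}
This is consistent with \eqref{Eq:WeylBJ1}: the factor $\sinc (\scal \eta y/2)$ is the multiplier form of convolution by $\Phi$, and $e^{i(t-\frac 12)\scal \eta y}$ transfers the Weyl case $t=\frac 12$ to a general $t$. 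The two features I will exploit are that $\Theta$ is bounded, $|\Theta |\le 1$, and that it depends on $(\eta ,y)$ only through the scalar product $\scal \eta y$. Thus the whole statement reduces to showing that $a\mapsto \Theta (D)a$ maps $M^{p,q}_{(\omega )}(\rr {2d})$ into itself.

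The heart of the matter, and the step I expect to be the main obstacle, is the continuity of a single chirp $a\mapsto e^{ic\scal {D_x}{D_\xi}}a$ on $M^{p,q}_{(\omega )}(\rr {2d})$, with a bound uniform for $c$ in a bounded set. I would use that $e^{ic\scal {D_x}{D_\xi}}$ is a metaplectic operator on $\rr {2d}$: on the Fourier side it is multiplication by the unimodular chirp $e^{ic\scal \eta y}$, whose associated linear symplectic map is a shear $S_c$ of the phase space $\rr {4d}$ with coordinates $(x,\xi ,\eta ,y)$ that fixes the frequency block $(\eta ,y)$ and is volume preserving. Metaplectic covariance of the short-time Fourier transform then expresses $V_\phi (e^{ic\scal {D_x}{D_\xi}}a)$ as $V_{\phi _c}a$ composed with $S_c$, up to a unimodular factor and for a suitable admissible window $\phi _c$ (a Gaussian is mapped to a Gaussian). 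Because $\omega (x,\xi ,\eta ,y)=\omega _0(\eta ,y)$ and $S_c$ leaves $(\eta ,y)$ fixed, the weight is invariant along $S_c$; and since $S_c$ is a measure-preserving shear in the $(x,\xi )$-block alone, the mixed norm with the $p$-integration in $(x,\xi )$ and the $q$-integration in $(\eta ,y)$ is unchanged. The moderateness $\omega _0\in \mascP _E(\rr {2d})$ is exactly what absorbs the change of window $\phi \to \phi _c$ and provides a bound uniform for $c\in [-1,1]$. This is the point at which the hypothesis that $\omega$ depends only on the frequency variables $(\eta ,y)$ is indispensable, and it is also the only place where a genuine estimate, rather than bookkeeping, is needed.

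It remains to assemble $b$ by integrating in $\tau$, and this I would do at the level of the short-time Fourier transform so as to treat all exponents at once. The distribution $b$ with $\opBJ (a)=\opp t (b)$ already exists and is unique by Theorem \ref{Thm:BJopsDistrMap}, and from $\widehat b=\int _0^1 e^{i(t-\tau )\scal \eta y}\widehat a\,d\tau$ one gets the scalar (weak-$*$) identity $V_\phi b=\int _0^1 V_\phi (e^{i(t-\tau )\scal {D_x}{D_\xi}}a)\,d\tau$, with no Banach-valued integration involved. Applying Minkowski's integral inequality when $p,q\ge 1$, and its quasi-Banach analogue when $0<q<1$, pulls the finite $\tau$-integral outside the weighted mixed $L^{p,q}$-(quasi-)norm, so that $\nm b{M^{p,q}_{(\omega )}}$ is dominated by $\sup _{0\le \tau \le 1}\nm {e^{i(t-\tau )\scal {D_x}{D_\xi}}a}{M^{p,q}_{(\omega )}}$. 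By the uniform chirp bound of the previous step this is finite and controlled by $\nm a{M^{p,q}_{(\omega )}}$, which gives $b\in M^{p,q}_{(\omega )}(\rr {2d})$ and completes the proof for every admissible $p$ and $q$, including the non-separable endpoints $p=\infty$ or $q=\infty$ and the quasi-Banach range $q<1$.
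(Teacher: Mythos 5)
Your reduction of the statement to the Fourier multiplier $\Theta (D)$ with $\Theta (\eta ,y)=\sinc (\scal \eta y/2)e^{i(t-\frac 12)\scal \eta y}$, and the metaplectic-shear argument for a single chirp $e^{ic\scal {D_x}{D_\xi}}$, are sound, and you correctly identify the role of the hypothesis $\omega (x,\xi ,\eta ,y)=\omega _0(\eta ,y)$: the shear $S_c$ moves only the unweighted $(x,\xi )$-block, so the weighted mixed norm is invariant up to a window change. This is genuinely different from the paper's proof, which instead discretizes $a$ in a Gabor frame from \cite{Toft9}, computes the short-time Fourier transform of each chirped Gabor atom explicitly, and runs the estimate through lattice sums; in the Banach range $p,q\in [1,\infty ]$ your route is arguably cleaner and reaches the same conclusion.

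However, the theorem allows $q\in (0,1)$, and your final assembly step has a genuine gap there: you invoke a ``quasi-Banach analogue'' of Minkowski's integral inequality to pull $\int _0^1\, d\tau$ out of the $M^{p,q}_{(\omega )}$ quasi-norm, but no such analogue exists. For $q<1$ the outer $L^q$-quasi-norm in the $(\eta ,y)$-variables is not locally convex, and $\nm {\int f_\tau \, d\tau}{L^q}\le \int \nm {f_\tau}{L^q}\, d\tau$ fails in general; uniform boundedness of $\tau \mapsto e^{i(t-\tau )\scal {D_x}{D_\xi}}a$ in $M^{p,q}_{(\omega )}$ does not by itself place the integral in the space. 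This is exactly the difficulty the paper's Gabor discretization is built to avoid: there the $\tau$-integral is performed \emph{inside} the estimate, producing factors $\int _0^1 e^{-r|X-\mabfj +t\mabfk |}\, dt$ in which the $t$-dependent shift hits only the $X$-variable, where Minkowski's inequality is legitimate since $p\ge 1$; the $q$-quasi-norm then only meets the $t$-independent discrete convolution $c_0*e^{-r|\cdot |}$, which is handled by the discrete Young inequality $\ell ^q*\ell ^r\subseteq \ell ^q$ with $r=\min (1,q)$, valid for all $q>0$. A secondary soft spot of the same nature: your window change $\phi \to \phi _c$ in the quasi-Banach range cannot be done with the usual continuous-convolution estimate either; it requires windows in $M^r_{(v)}$, $r=\min (1,q)$, and the discrete machinery of \cite{Toft9}. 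So your proof stands for $q\ge 1$, but to cover the stated range $q\in (0,\infty ]$ you must either discretize as the paper does, or replace the $\tau$-integration by some absolutely summable expansion of $\Theta$ compatible with the quasi-norm.
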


\par

For the proof we recall that if $\mu$ and $\nu$ are positive
measures on measurable spaces, $p,q\in (0,\infty ]$ satisfy $q\le p$
and $f$ is $\mu\times \nu$ measurable, then
Minkowski's inequality asserts that
$$
\nm G{L^p(d\nu )} \le \nm H{L^q(d\mu )}
$$
when
$$
G(y) \equiv \nm {f(\cdo ,y)}{L^q(d\mu )} 
\quad \text{and}\quad
H(x) \equiv \nm {f(x,\cdo )}{L^p(d\nu )}.
$$

\par

In order to treat the case $p\in (0,1]$ in suitable ways, we need the following
lemma.

\par

\begin{lemma}\label{Lemma:BJModSymbols}
Let $y\in \rr d$, $\Lambda \subseteq \rr d$ be a lattice and
$p\in (0,\infty)$. Then
\begin{align}
\int _{\rr d}
\left (
\int _0^1 e^{-r|x-ty|}\, dt
\right )^p
\, dx
&\le
\left (
\frac {10\sqrt d}{pr}
\right )^d
e^{-\frac {pr}{8\sqrt d}\cdot |y|}.
\label{Eq:IntExpEst}
\intertext{and}
\sum _{\mabfj \in \Lambda}
\left (
\int _0^1 e^{-r|\mabfj -ty|}\, dt
\right )^p
\, dx
&\lesssim
e^{-\frac {pr}{8\sqrt d}\cdot |y|}.
\label{Eq:IntExpEst2}
\end{align}
\end{lemma}

\par

\begin{proof}
It is clear that both sides of \eqref{Eq:IntExpEst} are even functions
with respect to each $y_j$ in $y=(y_1,\dots ,y_d)$. Hence we may assume
that $y_j\ge 0$ for every $j$, when proving \eqref{Eq:IntExpEst}. We prove
only \eqref{Eq:IntExpEst}. The estimate \eqref{Eq:IntExpEst2}
follows by similar arguments and is left for the reader.

\par

Since $u\mapsto e^{-ru}$ is a convex function,
H{\"o}lder's and Jensen's inequalities give
\begin{multline}\label{Eq:ExpIntHoldJensen}
\int _{\rr d}
\left (
\int _0^1 e^{-r|x-ty|}\, dt
\right )^p
\, dx
\le
\int _{\rr d}
\left (
\int _0^1 \prod _{j=1}^d e^{-\frac r{\sqrt d}\cdot |x_j-ty_j|}\, dt
\right )^p
\, dx
\\[1ex]
\le
\int _{\rr d}
\prod _{j=1}^d 
\left (
\int _0^1 e^{-r{\sqrt d}\cdot |x_j-ty_j|}\, dt
\right )^{\frac pd}
\, dx
\le 
\prod _{j=1}^d 
\int _{\mathbf R}h(x_j,y_j)\, dx_j,
\end{multline}
where
\begin{equation}\label{Eq:hFunctDef}
h(u_1,u_2) = \exp
\left (
-\frac {pr}{\sqrt d}\int _0^1 |u_1-tu_2|\, dt
\right )
,\quad u_1\in \mathbf R,\ u_2\in \overline{\mathbf R_+}\, .
\end{equation}

\par

We need to evaluate the integral in \eqref{Eq:hFunctDef}. By straight-forward
computations we get
$$
\int _0^1 |u_1-tu_2|\, dt
=
\begin{cases}
\frac {2u_1-u_2}2, & u_1\ge u_2,
\\[1ex]
\frac {2u_1^2-2u_1u_2+u_2^2}{2u_2}, & 0\le u_1\le u_2,
\\[1ex]
\frac {u_2-2u_1}2, & u_1<0.
\end{cases}
$$
This gives
$$
\int _{\mathbf R}h(u_1,u_2)\, du_1
=
I_1(u_2)+I_2(u_2)+I_3(u_2),
$$
where
\begin{align*}
I_1(u_2) &= \int _{u_2}^\infty e^{-\frac {pr}{\sqrt d}\cdot \frac {2u_1-u_2}2}\, du_1
=
\frac {\sqrt d}{pr}e^{-\frac {pr}{2\sqrt d}u_2},
\\[1ex]
I_2(u_2) &= \int _0^{u_2}
e^{-\frac {pr}{\sqrt d}\cdot \frac {2u_1^2-2u_1u_2+u_2^2}{2u_2}}\, du_1
\le
\int _0^{u_2}
e^{-\frac {pr}{\sqrt d}\cdot \frac {2u_1^2-2u_1u_2+u_2^2}{2u_2}}\, du_1
\\[1ex]
&\le
\int _0^{u_2} e^{-\frac {pr}{4\sqrt d}\cdot u_2}\, du_1 =
u_2e^{-\frac {pr}{4\sqrt d}\cdot u_2}
\le
\frac {8\sqrt d}{pr}e^{-\frac {pr}{8\sqrt d}\cdot u_2}
\intertext{and}
I_1(u_2) &= \int _{-\infty}^0
e^{-\frac {pr}{\sqrt d}\cdot \frac {u_2-2u_1}2}\, du_1
=
\frac {\sqrt d}{pr}e^{-\frac {pr}{2\sqrt d}u_2}.
\end{align*}

\par

By combining these estimates we get
$$
\int _{\mathbf R}h(u_1,u_2)\, du_1
\le
\frac {10\sqrt d}{pr}e^{-\frac {pr}{8\sqrt d}\cdot u_2}.
$$
Hence, \eqref{Eq:ExpIntHoldJensen} and the fact that $y_j\ge 0$
for every $j$ give
\begin{multline*}
\int _{\rr d}
\left (
\int _0^1 e^{-r|x-ty|}\, dt
\right )^p
\, dx
\le
\left (
\frac {10\sqrt d}{pr}
\right )^d
e^{-\frac {pr}{8\sqrt d}(y_1+\cdots +y_d)}
\\[1ex]
\le
\left (
\frac {10\sqrt d}{pr}
\right )^d
e^{-\frac {pr}{8\sqrt d}|y|},
\end{multline*}
and the result follows.
\end{proof}

\par

\begin{proof}[Proof of Theorem \ref{BJModSymbols}]
By the assumptions we have
$$
a_1\in M^{p,q}_{(\omega )}(\rr {2d})
\quad \Leftrightarrow \quad
a_2\in M^{p,q}_{(\omega )}(\rr {2d})
$$
when $\op _{t_1}(a_1)=\op _{t_2}(a_2)$, in view of \cite[Proposition 1.7]{Toft9}.
Hence it suffices to prove the result in the case $t =0$.
We also assume that $p,q<\infty$. The cases when
$p=\infty$ or $q=\infty$ follow by similar arguments and
are left for the reader.

\par

Let $r=\min (1,q)$, $\Lambda _\ep =\ep \zz {2d}$ and
$\Lambda _\ep ^2=\Lambda _\ep\times \Lambda _\ep$
when $\ep >0$. By \cite{Toft9} there are
$v\in \mascP _E(\rr {4d})$ which is submultiplicative
such that $\omega$ is $v$-moderate,
$$
\Psi \in \Sigma _1(\rr {2d})
\quad \text{and}\quad
\Psi _{\! 0}\in M^r_{(v)}(\rr {2d})
$$
such that
\begin{align}
a(X) &= \sum _{\mabfj ,\mabfk \in \Lambda _\ep}
c(\mabfj ,\mabfk)\Psi (X-\mabfj )
e^{i\scal X{\rho (\mabfk )}},\label{Eq:GaborExp}
\intertext{and}
\nm c{\ell ^{p,q}_{(\omega)}(\Lambda _\ep ^2)}
&\asymp
\nm a{M^{p,q}_{(\omega )}},
\notag
\intertext{where}
c(\mabfj ,\mabfk) &= (V_{\Psi _{\! 0}}a)(\mabfj ,\rho (\mabfk ))
\label{Eq:GaborCoeff}
\end{align}
when $a\in M^{p,q}_{(\omega )}(\rr {2d})$, provided $\ep >0$ is
chosen small enough.
Here $\rho$ is the reflexion operator on $\rr {2d}$ given by
$\rho (x,\xi )=(\xi ,x)$ when $x,\xi \in \rr d$.

\par

For $a\in M^{p,q}_{(\omega )}(\rr {2d})$ and
$\Phi \in \Sigma _1(\rr {2d})\setminus 0$ fixed we now get
\begin{align}
V_\Phi (e^{it \scal {D_\xi}{D_x}}a) &= \sum
_{\mabfj ,\mabfk \in \Lambda _\ep}
c(\mabfj ,\mabfk )H_{\mabfj ,\mabfk},
\label{Eq:STFTExp1}
\intertext{where}
H_{\mabfj ,\mabfk} &= V_\Phi (e^{it \scal {D_\xi}{D_x}}
(\Psi (\cdo -\mabfj )e^{i\scal \cdo {\rho (\mabfk)}}).
\notag
\end{align}
We need to simplify $H_{\mabfj ,\mabfk}$. By the definitions and
straight-forward computations, using Fourier's inversion formula
we get
$$
H_{\mabfj ,\mabfk}(X,\rho (Y))=e^{i(\scal X{\rho (\mabfk -Y)} +it\scal k\kappa}
 (V_\Phi \Psi _t )(X-\mabfj +t \mabfk ,\rho (Y-\mabfk )),
$$
where
$$
\Psi _t = e^{it \scal {D_\xi}{D_x}}\Psi
\quad \text{and}\quad
\mabfk =(k,\kappa )
$$
(see (1.13) in \cite{Toft6}, and its proof).

\par

Since $\Psi \in \Sigma _1(\rr {2d})$, it follows from \cite{CaTo,Tr} that
$\sets {\Psi _t}{t\in [0,1]}$ is a bounded set in $\Sigma _1(\rr {2d})$.
Hence, by \cite{ChuChuKim}, for every $r>0$ there is a constant $C_r$
which is independent of $t \in [0,1]$ such that
$$
|H_{\mabfj ,\mabfk}(X,\rho (Y))|
=
|(V_\Phi \Psi _t )(X-\mabfj +t \mabfk ,\rho (Y-\mabfk ))|
\le C_re^{-r(|X-\mabfj +t \mabfk |+|Y-\mabfk |)}.
$$

\par

By using the latter estimate in \eqref{Eq:STFTExp1} we get
with $r>0$ large enough that
\begin{multline}\label{Eq:BasicGaborExpEst}
|V_\Phi b(X,\rho (Y))|\omega _0(\rho (Y))
\\[1ex]
\lesssim
\sum _{\mabfj ,\mabfk \in \Lambda _\ep}|c(\mabfj ,\mabfk )|
\int _0^1e^{-r(|X-\mabfj +t \mabfk |}\, dt e^{-2r|Y-\mabfk |}
\omega _0(\rho (Y))
\\[1ex]
\lesssim
\sum _{\mabfj ,\mabfk \in \Lambda _\ep}|c_0(\mabfj ,\mabfk )|
\int _0^1e^{-r(|X-\mabfj +t \mabfk |}\, dt e^{-r|Y-\mabfk |},
\end{multline}
where $c_0(\mabfj ,\mabfk ) =c(\mabfj ,\mabfk )\omega _0(\rho (\mabfk))$.
We shall now consider the two cases $p\ge 1$ and $p<1$
separately.

\par

First suppose that $p\ge 1$. By Minkowski's inequality we get
$$
\nm {V_\Phi b(\cdo ,\rho (Y))}{\ell ^p(\Lambda _\ep )}\omega _0(\rho (Y))
\lesssim
\sum _{\mabfk \in \Lambda _\ep} h(\mabfk )e^{-r|Y-\mabfk |}
$$
where
$$
h(\mabfk )
=
\int _0^1
\left (
\sum _{\mabfm \in \Lambda _\ep}
\left (
\sum _{\mabfj \in \Lambda _\ep}
|c_0(\mabfj ,\mabfk )|e^{-r|\mabfm -\mabfj+t \mabfk |}
\right )^p
\right )^{1/p}\, dt .
$$
We have
\begin{multline*}
h(\mabfk )
\lesssim
\int _0^1
\left (
\sum _{\mabfm \in \Lambda _\ep}
\left (
\sum _{\mabfj \in \Lambda _\ep}
|c_0(\mabfj ,\mabfk )|e^{-r|\mabfm -\mabfj |}
\right )^p
\right )^{1/p}\, dt
\\[1ex]
=
\nm {c_0(\cdo ,\mabfk )*e^{-r|\cdo |}}{\ell ^p}
\lesssim
\nm {c_0(\cdo ,\mabfk )}{\ell ^p},
\end{multline*}
where the last step follows from Young's inequality. Here $*$
denotes the discrete convolution. If $h_1(\mabfk )
=\nm {c_0(\cdo ,\mabfk )}{\ell ^p}$, then
we get from these estimates that
$$
\nm {V_\Phi b(\cdo ,\rho (\mabfn ))}{\ell ^p(\Lambda _\ep )}\omega _0(\rho (\mabfn ))
\lesssim
\sum _{\mabfk \in \Lambda _\ep} h_1(\mabfk )e^{-r|\mabfn -\mabfk |}
= (h_1*e^{-r|\cdo |})(\mabfn )
$$
By applying the $\ell ^q$ norm on the last inequality we get
\begin{multline*}
\nm b{M^{p,q}_{(\omega )}}
\asymp
\nm c{\ell ^{p,q}_{(\omega )}}
\\[1ex]
\lesssim
\nm {h_1*e^{-r|\cdo |}}{\ell ^q}
\le
\nm {h_1}{\ell ^q}\nm {e^{-r|\cdo |}}{\ell ^r}
\asymp
\nm {h_1}{\ell ^q} \asymp \nm a{M^{p,q}_{(\omega )}}.
\end{multline*}
Hence we have proved
$$
\nm b{M^{p,q}_{(\omega )}}\lesssim \nm a{M^{p,q}_{(\omega )}},
$$
and the result follows in the case $p\ge 1$.

\par

Suppose instead $p<1$ and let $h_1$ be as above.
By Lemma \ref{Lemma:BJModSymbols},
applying the $\ell ^p(\Lambda _\ep)$ norm with respect to the $X$ variable
on the inequality \eqref{Eq:BasicGaborExpEst}, and using the inequality
$|a+b|^p\le |a|^p+|b|^p$ we get
\begin{multline*}
\nm {V_\Phi b(\cdo ,\rho (Y))}{\ell ^p(\Lambda _\ep )}^p\omega _0(\rho (Y))^p
\\[1ex]
\lesssim
\sum _{\mabfj ,\mabfk \in \Lambda _\ep}|c_0(\mabfj ,\mabfk )|^p
\left (
\sum _{\mabfm \in \Lambda _\ep}
\left (
\int _0^1e^{-r(|\mabfm -\mabfj +t \mabfk |}\, dt
\right ) ^p
\right )
e^{-rp|Y-\mabfk |}
\\[1ex]
\lesssim
\sum _{\mabfj ,\mabfk \in \Lambda _\ep}|c_0(\mabfj ,\mabfk )|^p
e^{-rp|Y-\mabfk |} 
=
\sum _{\mabfk \in \Lambda _\ep}|h_1(\mabfk )|^p
e^{-rp|Y-\mabfk |}.
\end{multline*}
We now apply the $\ell ^{\frac qp}(\Lambda _\ep)$ quasi-norm on
on the latter inequality and use the Young type inequality
$$
\nm {c_1*c_2}{\ell ^{\frac qp}}
\lesssim
\nm {c_1}{\ell ^{\frac qp}}\nm {c_2}{\ell ^{\min (1,\frac qp )}}
$$
to get
\begin{multline*}
\nm b{M^{p,q}_{(\omega )}}
\asymp
\nm {V_\Phi b \cdot \omega}{\ell ^{p,q}(\Lambda _\ep \times \Lambda _\ep)}
\lesssim
\nm {h_1^p * e^{-rp|\cdo |}}{\ell ^{\frac qp}(\Lambda _\ep)}
^{\frac 1p}
\\[1ex]
\le
\left (
\nm {h_1^p}{\ell ^{\frac qp}(\Lambda _\ep)}
\nm {e^{-rp|\cdo |}}{\ell ^{\min (1,\frac qp)}(\Lambda _\ep)}
\right )
^{\frac 1p}
\asymp
\nm {h_1}{\ell ^q(\Lambda _\ep)}\asymp \nm a{M^{p,q}_{(\omega)}},
\end{multline*}
which gives the result in the case $p<1$ as well.
\end{proof}

\par

We finish this section by giving some consequences of Theorem
\ref{BJModSymbols} and well-known mapping properties for
pseudo-differential operators with symbols in modulation spaces.
The involved weight functions should satisfy
\begin{equation}\label{Eq:WeightFunctCond}
\omega (x,\xi ,\eta ,y) = \omega _0(\eta ,y)
\quad \text{and}\quad
\frac {\omega _2(x,\eta -\xi )}{\omega _1(x+y,\eta +\xi )}
\lesssim \omega _0(\eta ,y).
\end{equation}

\par

The first result is a straight-forward consequence of
\cite[Theorem 3.1]{Toft19} and Theorem \ref{BJModSymbols}.
The details are left for the reader.

\par

\begin{thm}\label{Thm:BornJordOpModCont}
Let $\omega _j\in \mascP _E(\rr {2d})$, $j=0,1,2$,
$\omega \in \mascP _E(\rr {4d})$ be such that
\eqref{Eq:WeightFunctCond} holds, $p,q,p_j,q_j\in (0,\infty ]$,
$j=1,2$, be such that
$$
\frac 1{p_2}-\frac 1{p_1} = \frac 1{q_2}-\frac 1{q_1}
= \frac 1p+\min 
\left (
0,\frac 1q-1
\right ),
\quad
q\le p_2,q_2\le p,
$$
and let $a\in M^{p,q}_{(\omega )}(\rr {2d})$. Then $\op _{\BJ}(a)$
from $\Sigma _1(\rr d)$ to $\Sigma _1'(\rr d)$ is uniquely
extendable to a continuous map from $M^{p_1,q_1}_{(\omega _1)}(\rr d)$
to $M^{p_2,q_2}_{(\omega _2)}(\rr d)$.
\end{thm}

\par

The next result follows from \cite[Theorem 3.1]{Toft21} and
Theorem \ref{Thm:BJopsDistrMap}. We leave the details for the reader.

\par

\begin{thm}\label{Thm:BornJordOpSchattenBanachSp}
Let $\omega _j\in \mascP _E(\rr {2d})$, $j=0,1,2$,
$\omega \in \mascP _E(\rr {4d})$ be such that
\eqref{Eq:WeightFunctCond} holds, $p,q,r\in (0,\infty ]$
be such that
$$
\frac 1r-1\ge \max
\left (
\frac 1p-1,0
\right )
+
\left (
\frac 1q-1,0
\right )
+\frac 1q,
$$
and let $a\in M^r_{(\omega )}(\rr {2d})$. Then
$$
\op _{\BJ}(a) \in \mascI _q(M^\infty _{(\omega _1)}(\rr d),M^p_{(\omega _2)}(\rr d)).
$$
\end{thm}

\par

The following result concerns nuclearity for pseudo-differential
operators and is a consequence of \cite[Theorem 4.2]{Toft21} and
Theorem \ref{Thm:BJopsDistrMap}. The details are left
for the reader.

\par

\begin{thm}\label{Thm:BornJordOpNuclearBanachSp}
Let $\omega _j\in \mascP _E(\rr {2d})$, $j=0,1,2$,
$\omega \in \mascP _E(\rr {4d})$ be such that
\eqref{Eq:WeightFunctCond} holds, $p\in (0,1]$
and let $a\in M^p_{(\omega )}(\rr {2d})$. Then
$$
\op _{\BJ}(a) \in \mascN _p(M^\infty _{(\omega _1)}(\rr d),
M^p_{(\omega _2)}(\rr d)).
$$
\end{thm}

\par

The last result in this record concerns Schatten-von Neumann
properties for Born-Jordan operators when acting on modulation
spaces of Hilbert type. For $p\in (0,1]$, the result follows from
Theorem \ref{Thm:BornJordOpNuclearBanachSp} and the fact that
\begin{multline*}
\mascN _p(M^\infty _{(\omega _1)}(\rr d),M^p_{(\omega _2)}(\rr d))
\subseteq
\mascN _p(M^2 _{(\omega _1)}(\rr d),M^2_{(\omega _2)}(\rr d))
\\[1ex]
=
\mascI _p(M^2 _{(\omega _1)}(\rr d),M^2_{(\omega _2)}(\rr d)),
\end{multline*}
because
$$
M^2 _{(\omega _1)}(\rr d)\subseteq M^\infty _{(\omega _1)}(\rr d)
\quad \text{and}\quad
M^p_{(\omega _2)}(\rr d)\subseteq M^2_{(\omega _2)}(\rr d),\quad
p\le 2,
$$
since modulation spaces increase with their Lebesgue exponents.
For $p\in [1,2]$, the result follows from the extension
\cite[Theorem A.3]{Toft} of \cite[Theorem 4.13]{Toft6}
and Theorem \ref{Thm:BJopsDistrMap}.

\par

\begin{thm}\label{Thm:BornJordOpHilbSchatten}
Let $\omega _j\in \mascP _E(\rr {2d})$, $j=0,1,2$,
$\omega \in \mascP _E(\rr {4d})$ be such that
\eqref{Eq:WeightFunctCond} holds, $p\in (0,\infty ]$, let $q=p$ when
$p\le 2$ and $q=p'$ when $p>2$
and let $a\in M^{p,q}_{(\omega )}(\rr {2d})$. Then
$$
\op _{\BJ}(a) \in \mascI _p(M^2 _{(\omega _1)}(\rr d),M^2_{(\omega _2)}(\rr d)).
$$
\end{thm}

\par

\section{Born-Jordan operators with H{\"o}rmander symbols}\label{sec4}

\par

In this section we show that any Born-Jordan operator with symbol in a suitable
H{\"o}rmander class is a pseudo-differential operator with symbol in the same
class. Furthermore, we deduce Schatten-von Neumann properties and lower
bound estimates for such operators. Especially we deduce
Feffermann-Phong's inequality for Born-Jordan operators.

\par

First we recall the definition of the involved symbol classes. Let $g$
be a Riemannian metric on the phase space $\rr {2d}\simeq T^*\rr d$, let
$m>0$ be a function in $L^\infty _{loc}(\rr {2d})$, and let $N\ge 0$
be an integer. For any $a\in C^N(\rr {2d})$ and $X=(x,\xi )\in \rr {2d}$, let $|a|_0^g(X) =
|a(X)|$, and
$$
|a|_k^g(X)\equiv \sup |a^{(k)}(X; Y_1,\dots ,Y_k)|, \quad k\ge 1.
$$
Here the supremum is taken over all $Y_1,\dots ,Y_k\in \rr {2d}$ such that
$g_X(Y_j)\le 1$ for every $j=1,\dots ,k$. We also set
$$
\nm a{m,N}^g \equiv \sum _{k=0}^N\sup _{X\in W} (|a|_k^g(X)/m(X)).
$$
Then $S(m,g)$ consists of all $a\in C^\infty (\rr {2d})$
such that $\nm a{m,N}^g$ is finite for every $N\ge 0$. We equip
this space by the topology, induced by the semi-norms
$\nm \cdo {m,N}^g$, $N\ge 0$.

\par

The dual metric $g^\sigma$ with respect to the (standard) symplectic form
$\sigma$ is defined by
$$
g^\sigma _X(Z) \equiv \sup |\sigma (Y,Z)|^2,\quad X,Z\in \rr {2d},
$$
where the supremum is taken over all $Y\in \rr {2d}$ such that $g_X(Y)\le 1$.
Furthermore, the Planck's function $h_g(X)$ is defined by
$$
h_g(X) \equiv \sup g_X(Y)^{1/2},\quad X\in \rr {2d}.
$$
where the supremum is taken over all $Y\in \rr {2d}$ such that
$g^\sigma _X(Y)\le 1$.

\par

As in \cite{BuTo,Toft4} we need some restrictions on $m$ and $g$. More precisely,
the metric $g$ on $\rr {2d}$ is called \emph{slowly varying} if there are constants
$c,C>0$ such that
$$
C^{-1}g_X\le g_Y\le Cg_X,\quad \text{when}\quad g_X(Y)<c,
$$
and $m$ is called $g$-continuous if there are constants $c,C>0$ such that
$$
C^{-1}m(X)\le m(Y)\le Cm(X),\quad \text{when}\quad g_X(Y)<c,
$$
The metric $g$ is called \emph{$\sigma$-temperate} if there are constants $C,N>0$
such that
$$
g_X(Z)\le Cg_Y(Z)(1+g_X(X-Y))^N,\quad \text{for all}\quad X,Y,Z\in \rr {2d},
$$
and $m$ is called \emph{$(\sigma ,g)$-temperate} if there are constants $C,N>0$
such that
$$
m(X)\le Cm(Y)(1+g_X(X-Y))^N,\quad \text{for all}\quad X,Y\in \rr {2d}.
$$

\par

The metric $g$ on $W$ is called \emph{strongly feasible} if $g$ is slowly varying,
$\sigma$-temperate and $h_g\le 1$. The weight $m$ is called
\emph{$g$-feasible} if $m$ is $g$-continuous and $(\sigma ,g)$-temperate.

\par

Finally, the Riemannian metric $g$ on $\rr {2d}$ is called \emph{split} or
\emph{split metric}, if
$$
g_X(z,\zeta ) = g_X(z,-\zeta ),\quad \text{when} \quad X,(z,\zeta )\in \rr {2d}.
$$

\par

\begin{rem}\label{Rem:ClassicalSymbolClasses}
The family $S(m,g)$ may serve as a home for several classical symbol classes.
For example we have the following (see \cite{Horm} for details). Here we let 
$\eabs x=(1+|x|^2)^{\frac 12}$ when $x\in \rr d$, $X=(x,\xi )\in \rr {2d}$ and
$Y=(y,\eta )\in \rr {2d}$.
\begin{enumerate}
\item Let $r,\rho ,\delta \in \mathbf R$ satisfy $0\le \delta \le \rho \le 1$ and $\delta \neq 1$,
$$
g_X(Y) = \eabs \xi ^{2\delta} |y|^2 +\eabs \xi ^{-2\rho}|\eta |^2
\quad \text{and}\quad
m(X)=\eabs \xi ^r .
$$
Then $g$ is strongly feasible and split metric on $\rr {2d}$, $m$ is $g$-feasible
and $S(m,g)$ is equal to the H{\"o}rmander class $S^r_{\rho ,\delta}(\rr {2d})$
which consists of all $a\in C^\infty (\rr {2d})$ such that
$$
|\partial _x^\alpha \partial _\xi ^\beta a(x,\xi )|
\lesssim
\eabs \xi ^{r-\rho |\beta |+\delta |\alpha |}\text ;
$$

\vrum

\item Let $r,t,\rho ,\tau \in \mathbf R$ satisfy $0< t,\tau \le 1$,
$$
g_X(Y) = \eabs x ^{-2t} |y|^2 +\eabs \xi ^{-2\tau}|\eta |^2
\quad \text{and}\quad
m(X)=\eabs x ^r\eabs \xi ^\rho .
$$
Then $g$ is strongly feasible and split metric on $\rr {2d}$, $m$ is $g$-feasible
and $S(m,g)$ is equal to the SG class $\operatorname{SG}^{r,\rho }_{t ,\tau}(\rr {2d})$
which consists of all $a\in C^\infty (\rr {2d})$ such that
$$
|\partial _x^\alpha \partial _\xi ^\beta a(x,\xi )|
\lesssim
\eabs x ^{r-t |\alpha |}\eabs \xi ^{\rho -\tau |\beta |}\text ;
$$

\vrum

\item Let $r,t \in \mathbf R$ satisfy $0< t \le 1$,
$$
g_X(Y) = \eabs {(x,\xi )} ^{-2t} (|y|^2 +|\eta |^2 )
\quad \text{and}\quad
m(X)=\eabs {(x,\xi )} ^r.
$$
Then $g$ is strongly feasible and split metric on $\rr {2d}$, $m$ is $g$-feasible
and $S(m,g)$ is equal to the Shubin class $\operatorname{Sh}^{r}_{t}(\rr {2d})$
which consists of all $a\in C^\infty (\rr {2d})$ such that
$$
|\partial _x^\alpha \partial _\xi ^\beta a(x,\xi )|
\lesssim
\eabs {(x,\xi )} ^{r-t |\alpha +\beta |}.
$$
\end{enumerate}
\end{rem}

\par

We have now the following.

\par

\begin{thm}\label{BornJordanHormWeyl}
Let $g$ be strongly feasible and split metric on $\rr {2d}$, $m$ be $g$-feasible,
$N> 0$ be an integer, and let $a\in S(m,g)$. Then
$$
\opBJ (a) =\op ^w(b)
$$
for some $b\in S(m,g)$. Furthermore,
\begin{equation}\label{bassexp}
b-\sum _{2j<N} \frac {(-1)^j\scal {D_\xi }{D_x}^{2j}a}{4^j(2j+1)!}\in S(h_g^{N}m,g)
\end{equation}
\end{thm}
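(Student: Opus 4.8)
The plan is to reduce the statement to an analysis of the change of quantization inside the Weyl--H\"ormander calculus, starting from the identity \eqref{Eq:WeylBJ1}, which already gives $\opBJ (a)=\op ^w(b)$ with $b=\Phi *a$; the whole point is therefore to show $b\in S(m,g)$ together with the expansion \eqref{bassexp}. The key algebraic observation is that, at the level of symbols, passing from the $t$-quantization to the Weyl quantization is implemented by the phase-space Fourier multiplier $e^{i(1/2-t )\scal {D_\xi}{D_x}}$; that is, for the fixed symbol $a$ one has $\opp t (a)=\op ^w(a_t )$ with $a_t =e^{i(1/2-t )\scal {D_\xi}{D_x}}a$, the normalization $a_{1/2}=a$ being consistent with $\opp {1/2}=\op ^w$. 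Averaging over $t\in [0,1]$ as in the definition of $\opBJ$ and using $\int _0^1e^{i(1/2-t )u}\, dt =\sinc (u/2)$, I would identify
$$
b=\int _0^1a_t \, dt =\sinc \Bigl (\tfrac 12\scal {D_\xi}{D_x}\Bigr )a,
$$
which is exactly $\Phi *a$ written as a multiplier in the variables dual to $(x,\xi )$.

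First I would establish the uniform mapping property of this propagator. By the Weyl--H\"ormander calculus for split, strongly feasible metrics developed in \cite{BuTo,Toft4}, the map $a\mapsto e^{is\scal {D_\xi}{D_x}}a$ is continuous on $S(m,g)$ with bounds that are \emph{uniform} for $s$ in the compact interval $[-\tfrac 12,\tfrac 12]$. This is precisely where the hypothesis that $g$ be split is indispensable: it ensures that the quadratic form $\scal {D_\xi}{D_x}$ is adapted to the metric, so that $e^{is\scal {D_\xi}{D_x}}$ does not leave $S(m,g)$. The same calculus yields, for each $N$, the finite Taylor expansion
$$
a_t -\sum _{k<N}\frac {\bigl (i(1/2-t )\bigr )^k}{k!}\, \scal {D_\xi}{D_x}^ka\in S(h_g^Nm,g),
$$
with a remainder $R_{t ,N}$ bounded in $S(h_g^Nm,g)$ uniformly in $t\in [0,1]$ (each application of $\scal {D_\xi}{D_x}$ gains a factor $h_g$ in the class).

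Next I would integrate in $t$. Since $t\mapsto a_t$ is continuous and uniformly bounded into $S(m,g)$, the integral $b=\int _0^1a_t \, dt$ is a well-defined element of $S(m,g)$, which already settles $\opBJ (a)=\op ^w(b)$ with $b\in S(m,g)$. For \eqref{bassexp} I would integrate the displayed Taylor expansion term by term: the coefficient of $\scal {D_\xi}{D_x}^ka$ becomes $\frac {i^k}{k!}\int _0^1(1/2-t )^k\, dt$, and the substitution $s=1/2-t$ shows that this integral vanishes for odd $k$ and equals $\frac 1{(2j+1)4^j}$ for $k=2j$. Together with $i^{2j}=(-1)^j$, the surviving terms reproduce exactly $\sum _{2j<N}\frac {(-1)^j}{4^j(2j+1)!}\scal {D_\xi}{D_x}^{2j}a$, while the leftover $\int _0^1R_{t ,N}\, dt$ lies in $S(h_g^Nm,g)$ by the uniform remainder bound. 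This is \eqref{bassexp}.

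The main obstacle is the technical input of the second paragraph: proving inside the Weyl--H\"ormander calculus that $e^{is\scal {D_\xi}{D_x}}$ preserves $S(m,g)$, together with its asymptotic expansion, and --- crucially --- that every estimate is uniform in the parameter, so that the $t$-integration can be carried out inside the symbol class. This uniformity is exactly what the split condition together with slow variation, $\sigma$-temperance and the uncertainty inequality $h_g\le 1$ are designed to provide, and it is here that the results of \cite{BuTo,Toft4} must be invoked (and, if needed, refined so as to furnish constants independent of the quantization parameter).
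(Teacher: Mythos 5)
Your proposal is correct and takes essentially the same route as the paper: what you postulate about $a_t$ (uniform boundedness in $S(m,g)$ for $t$ in a bounded interval, plus the Taylor expansion with remainder uniformly bounded in $S(h_g^{N}m,g)$) is exactly the content of the paper's Proposition \ref{Calculitransfer}, quoted there as a restatement of H{\"o}rmander's Proposition 18.5.10, after which both arguments integrate over $t\in [0,1]$ and use that odd moments of $t-\tfrac 12$ vanish while $\int _0^1(t-\tfrac 12)^{2j}\, dt = 4^{-j}(2j+1)^{-1}$ to obtain \eqref{bassexp}. The only discrepancy is a sign convention in the exponent of the quantization-change propagator ($e^{i(1/2-t)\scal {D_\xi}{D_x}}$ versus the paper's $(t-\tfrac 12)^k i^k$ expansion), which is immaterial since only even powers survive the averaging.
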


\par

For the proof we need the following proposition. We omit the proof, since
the result is essentially a restatement of
Proposition 18.5.10 in \cite{Horm}.

\par

\begin{prop}\label{Calculitransfer}
Let $g$ be strongly feasible and split metric on $\rr {2d}$, $m$ be $g$-feasible,
$I\subseteq \mathbf R$ be bounded, $N> 0$ be an integer, $a\in S(m,g)$, and let
$a_t \in \mathscr S'(\rr {2d})$ be such that
\begin{equation}\label{CalculitransferForm}
\op ^w(a)=\op _t (a_t),\qquad t \in I.
\end{equation}
Then $\{ a_t \} _{t \in I}$ is a bounded subset of $S(m,g)$, and
\begin{equation}\label{atexpansions}
\left \{
a_t  -\sum _{k<N} \left ( t -\frac 12 \right )^k
\frac {i^k\scal {D_\xi }{D_x} ^ka}{k!}
\right \}
\end{equation}
is a bounded set in $S(h_g^{N}m,g)$.
\end{prop}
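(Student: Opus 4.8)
The plan is to reduce the statement to the classical change-of-quantization formula combined with the symbol calculus for the metric $g$. First I would show that the hypothesis \eqref{CalculitransferForm} determines $a_t$ uniquely and explicitly. Comparing the two oscillatory-integral representations of $\op ^w(a)=\opp t (a_t)$ coming from the definition \eqref{opta1}, a Fourier-analytic computation on phase space gives the exact relation
\begin{equation*}
a_t = e^{i(t-\frac 12)\scal {D_\xi}{D_x}}a,
\end{equation*}
interpreted a priori in $\mascS '(W)$; here the right-hand side is the Fourier multiplier whose full symbol is $e^{-i(t-\frac 12)\scal {\xi ^*}{x^*}}$, and its formal Taylor expansion in powers of $t-\frac 12$ reproduces exactly the terms in \eqref{atexpansions} (this consistency also fixes the sign in the exponent). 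The whole problem thereby becomes the continuity of $e^{i\theta \scal {D_\xi}{D_x}}$ on the H\"ormander classes, uniformly for $\theta = t-\frac 12$ in the bounded set $\{t-\frac 12 : t\in I\}$, together with a remainder estimate.

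The second step is the key gain lemma: for a split metric $g$ and any $g$-feasible weight $\mu$, the operator $\scal {D_\xi}{D_x}$ maps $S(\mu ,g)$ continuously into $S(h_g\mu ,g)$, so that each application produces one power of the Planck function $h_g$. I would prove this in symplectic coordinates adapted to $g_X$ at each fixed $X$; the split condition $g_X(z,\zeta)=g_X(z,-\zeta)$ forces $g_X$ to block-diagonalize with no cross terms between the $z$- and $\zeta$-directions, so in such coordinates $g_X=\sum _j\lambda _j(dz_j^2+d\zeta _j^2)$ with $\lambda _j\le 1$ and $h_g(X)=\max _j\lambda _j$. Writing $\scal {D_\xi}{D_x}=-\sum _j\partial _{\xi _j}\partial _{x_j}$ in $g_X$-normalized directional derivatives then produces a factor $\sum _j\lambda _j\lesssim h_g(X)$, which yields $|\scal {D_\xi}{D_x}a|_k^g(X)\lesssim h_g(X)|a|_{k+2}^g(X)$ and hence the claimed continuity. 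Since $h_g$ is itself $g$-continuous and $(\sigma ,g)$-temperate, $h_g^N m$ is again $g$-feasible, and iterating gives $\scal {D_\xi}{D_x}^Na\in S(h_g^Nm,g)$; in particular each expansion term $\tfrac{(t-\frac12)^ki^k}{k!}\scal {D_\xi}{D_x}^ka$ lies in $S(h_g^km,g)\subseteq S(m,g)$ with seminorms bounded uniformly for $t\in I$.

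The third step is the uniform continuity of $e^{i\theta \scal {D_\xi}{D_x}}$ on $S(\mu ,g)$ for $\mu$ equal to $m$ or $h_g^Nm$ and $\theta$ in a bounded set. This is the point where the Taylor series cannot simply be summed, since it is only asymptotic and not convergent, so the exponential must be handled directly through the $g$-calculus as an oscillatory integral, controlling every seminorm $\nm \cdo {\mu ,N}^g$ uniformly in $\theta$ by means of the $\sigma$-temperance of $g$ and $\mu$ and the uncertainty principle $h_g\le 1$. This is precisely the content I would import from the argument underlying Proposition 18.5.10 in \cite{Horm} (see also \cite{BuTo,Toft4}), and I expect it to be the main obstacle, as it requires the full strength of H\"ormander's continuity theorem for the Weyl calculus rather than any elementary manipulation.

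Finally, I would assemble the remainder estimate. Writing $\theta = t-\frac 12$ and using Taylor's formula with integral remainder,
\begin{equation*}
a_t-\sum _{k<N}\frac{(i\theta )^k}{k!}\scal {D_\xi}{D_x}^ka
=\frac{(i\theta )^N}{(N-1)!}\int _0^1(1-s)^{N-1}
e^{is\theta \scal {D_\xi}{D_x}}\scal {D_\xi}{D_x}^Na\, ds.
\end{equation*}
By the second step $\scal {D_\xi}{D_x}^Na\in S(h_g^Nm,g)$, and by the third step $e^{is\theta \scal {D_\xi}{D_x}}$ is bounded on $S(h_g^Nm,g)$ uniformly for $s\in[0,1]$ and $t\in I$. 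Integrating in $s$ and taking the supremum over $t\in I$ shows that $\Omega =\{\,a_t-\sum _{k<N}(\cdots )\,:\,t\in I\,\}$ is a bounded subset of $S(h_g^Nm,g)$, which is \eqref{atexpansions}; the boundedness of $\{a_t\}_{t\in I}$ in $S(m,g)$ then follows since $h_g^Nm\le m$, completing the proof.
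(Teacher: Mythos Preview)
Your outline is correct and is precisely the argument behind Proposition~18.5.10 in \cite{Horm}, to which the paper simply defers without giving its own proof. In particular your use of the split hypothesis in Step~2 is right: it guarantees that the Williamson normal form of $g_X$ can be reached by a block-diagonal symplectic change $(z,\zeta)\mapsto (Tz,(T^{\mathsf T})^{-1}\zeta)$, under which $\scal{D_\xi}{D_x}$ is invariant, so the $h_g$-gain per application of $\scal{D_\xi}{D_x}$ follows as you describe; the rest (uniform boundedness of $e^{i\theta\scal{D_\xi}{D_x}}$ on $S(\mu,g)$ and the integral Taylor remainder) is exactly H\"ormander's proof.
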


\par

\begin{proof}[Proof of Theorem \ref{BornJordanHormWeyl}]
Let $a_t $ be the same as in Proposition \ref{Calculitransfer}. Then
$$
b=\int _0^1 a_t \, dt  ,
$$
and Proposition \ref{Calculitransfer} shows that $b\in S(m,g)$.
Furthermore, by \eqref{atexpansions} we get
$$
b-  \sum _{k<N} \left (\int _0^1\left ( t -\frac 12 \right ) ^k\, dt \cdot
\frac {i^k\scal {D_\xi }{D_x} ^ka}{k!} \right ) \in S(h_g^{N}m,g),
$$
which gives \eqref{bassexp}. The proof is complete.
\end{proof}

\par

Theorem \ref{BornJordanHormWeyl} shows that several continuity
properties in the Weyl calculus in Chapter XVIII in \cite{Horm} carry
over to Born-Jordan operators. For example, the following results are
immediate consequences of Propositions 18.6.2 and 18.6.3
in \cite{Horm}, and Theorem \ref{BornJordanHormWeyl}.

\par

\begin{thm}\label{BJSchwartzcont}
Let $g$ be strongly feasible and split metric on $\rr {2d}$, $m$ be $g$-feasible,
and let $a\in S(m,g)$. Then $\opBJ (a)$
is continuous on $\mascS (\rr d)$ and is uniquely extendable to
a continuous operator on $\mascS '(\rr d)$.
\end{thm}

\par

\begin{prop}\label{BJL2cont}
Let $g$ be strongly feasible and split metric on $\rr {2d}$, 
and let $a\in S(1,g)$. Then $\opBJ (a)$
is continuous on $L^2 (\rr d)$.
\end{prop}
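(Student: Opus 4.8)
The plan is to deduce this directly from Theorem \ref{BornJordanHormWeyl}, reducing the $L^2$-continuity of the Born-Jordan operator to the corresponding classical statement for Weyl operators. First I would check that the constant weight $m\equiv 1$ is $g$-feasible: it is trivially $g$-continuous (the defining inequality holds with $C=1$) and trivially $(\sigma ,g)$-temperate (again with $C=1$ and any admissible $N$). Hence, with $m=1$, all hypotheses of Theorem \ref{BornJordanHormWeyl} are in force, since by assumption $g$ is strongly feasible and split and $a\in S(1,g)$.

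Applying Theorem \ref{BornJordanHormWeyl} to $a\in S(1,g)$, I obtain a symbol $b\in S(1,g)$ with $\opBJ (a)=\op ^w(b)$, the identity holding as continuous operators from $\mascS (\rr d)$ to $\mascS '(\rr d)$. It then remains to invoke the $L^2$-boundedness of Weyl operators whose symbols lie in $S(1,g)$ for strongly feasible $g$ — this is the Calder\'on--Vaillancourt theorem in the Weyl--H\"ormander framework, i.{\,}e. Proposition 18.6.3 in \cite{Horm}. Consequently $\op ^w(b)$ extends to a bounded operator on $L^2(\rr d)$, and since $\opBJ (a)$ and $\op ^w(b)$ coincide on the common dense domain $\mascS (\rr d)$, this bounded extension is precisely the extension of $\opBJ (a)$. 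Thus $\opBJ (a)$ is continuous on $L^2(\rr d)$.

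I do not anticipate a genuine obstacle: the substantive content has already been absorbed into Theorem \ref{BornJordanHormWeyl}, whose proof produces the Weyl symbol $b$ in the same H\"ormander class $S(1,g)$. Once that reduction is available, the conclusion is immediate from the established Weyl calculus, and the only step requiring explicit (though entirely routine) verification is that the constant weight $m\equiv 1$ qualifies as $g$-feasible so that the reduction theorem applies.
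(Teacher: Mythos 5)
Your proposal is correct and matches the paper's own argument: the paper derives this proposition as an immediate consequence of Theorem \ref{BornJordanHormWeyl} (applied with the trivially $g$-feasible weight $m\equiv 1$) together with Proposition 18.6.3 in \cite{Horm}, exactly as you do. Your explicit verification that $m\equiv 1$ is $g$-feasible is a routine detail the paper leaves implicit.
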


\par

The following two theorems extend the previous result. Here
and in what follows we let $L^\infty _0(\rr d)$ be the set of all $f\in L^\infty (\rr d)$
vanishing at infinity, i.{\,}e. $f$ should satisfy
$$
\lim _{R\to \infty} \,  \underset {|x|\ge R} {\operatorname {ess\, sup}}\,  |f(x)| =0. 
$$

\par

\begin{thm}\label{HormSchattenBJ}
Let $p\in [1,\infty ]$, $t \in \mathbf R$, $g$ be strongly
feasible and split metric on $\rr {2d}$, $m$ be $g$-continuous and
$(\sigma,g)$-temperate, and let $a\in S(m,g)$. Then the following is
true:
\begin{enumerate}
\item if $h_g^{k/2}m\in L^p(\rr {2d})$ for some $k\ge 0$, and $a\in L^p(\rr {2d})$,
then $\opBJ (a)\in \mascI _p(L^2(\rr d))$;

\vspace{0.1cm}

\item if $h_g^{k/2}m\in L^\infty _0(\rr {2d})$ for some $k\ge 0$, and $a\in
L^\infty _0(\rr {2d})$, then $\opBJ (a)$ is compact on $L^2(\rr d)$.
\end{enumerate}
\end{thm}

\par

\begin{thm}\label{HormSchattenBJcor}
Let $p\in (0,\infty ]$, $t\in\mathbf R$, $g$ be strongly
feasible and split metric on $\rr {2d}$, $m$ be $g$-continuous and
$(\sigma,g)$-temperate, and let $a\in S(m,g)$. Then the following is
true:
\begin{enumerate}
\item if $m\in L^p(\rr {2d})$, then $\opBJ (a)\in \mascI _p(L^2(\rr d))$;

\vspace{0.1cm}

\item if $m\in L^\infty _0(\rr {2d})$, then $\opBJ (a)$ is compact on
$L^2(\rr d)$.
\end{enumerate}
\end{thm}

\par

For the proof of Theorem \ref{HormSchattenBJ} we need the following result
which is a slight extension of \cite[Proposition 2.10]{BuTo}. The proof is omitted,
since the result follows by the arguments in the proof of \cite[Proposition 2.10]{BuTo}.

\par

\begin{lemma}\label{intersectionlemma}
Let $p$ and $g$ be the same as in Proposition \ref{HormSchattenBJ}
and let $m$ be $g$-continuous, $(\sigma,g)$-temperate and satisfies
$h_g^{N/2}m \in L^p(\rr {2d})$ ($h_g^{N/2}m \in L^\infty _0(\rr {2d})$) for some
$N\ge 0$. Also let $t \in [0,1]$, and let $a,a_t \in \mascS '(\rr {2d})$
be related as in
\eqref{CalculitransferForm}. Then the map $a\mapsto a_t $ on
$\mathscr S'(\rr {2d})$ restricts to a continuous isomorphism on $S(m,g)\cap
L^p(\rr {2d})$ ($S(m,g)\cap L^\infty _0(\rr {2d})$), which is uniformly bounded with
respect to $t \in [0,1]$.
\end{lemma}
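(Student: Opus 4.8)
The plan is to read the map $a\mapsto a_t$ through the defining identity \eqref{CalculitransferForm}. Since $\op ^w(a)=\opp {1/2}(a)$, the requirement $\op ^w(a)=\opp t (a_t)$ forces $a_t=e^{i(t-\frac 12)\scal {D_\xi}{D_x}}a$, a Fourier multiplier whose symbol $e^{i(t-\frac 12)\scal \zeta y}$ (with $\zeta ,y$ dual to $x,\xi$) is unimodular. By Proposition \ref{Calculitransfer} this map is already bounded on $S(m,g)$, uniformly for $t\in [0,1]$, and satisfies the asymptotic expansion \eqref{atexpansions}; hence the only genuinely new content is that it preserves $L^p(W)$ (respectively $L^\infty _0(W)$). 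Moreover the inverse map is $a_t\mapsto a=e^{-i(t-\frac 12)\scal {D_\xi}{D_x}}a_t$, of exactly the same type with $t-\frac 12$ replaced by its negative. I would therefore reduce everything to showing that $a\mapsto a_t$ sends $S(m,g)\cap L^p(W)$ into itself with a bound uniform in $t\in [0,1]$; the isomorphism property and the continuity of the inverse then follow by exchanging the roles of $a$ and $a_t$.

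For the $L^p$-preservation I would expand $a_t$ by means of \eqref{atexpansions} to an order $M\ge N/2$, so that
\begin{equation*}
a_t=\sum _{k<M}\Big (t-\tfrac 12\Big )^k\frac {i^k\scal {D_\xi}{D_x}^ka}{k!}+R_{t,M},\qquad R_{t,M}\in S(h_g^Mm,g).
\end{equation*}
Since $h_g\le 1$ and $M\ge N/2$, the pointwise bound $|R_{t,M}|\lesssim h_g^Mm\le h_g^{N/2}m$ combined with the hypothesis $h_g^{N/2}m\in L^p(W)$ places the remainder in $L^p(W)$, uniformly in $t$. The term $k=0$ is $a\in L^p(W)$ by assumption, and for $k\ge N/2$ the symbol estimate $|\scal {D_\xi}{D_x}^ka|\lesssim h_g^km\le h_g^{N/2}m$ again lands the term in $L^p(W)$. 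The difficulty is thus concentrated in the finitely many intermediate terms $\scal {D_\xi}{D_x}^ka$ with $1\le k<N/2$, whose symbol class $S(h_g^km,g)$ need not embed into $L^p(W)$.

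These intermediate terms are exactly where both $a\in S(m,g)$ and $a\in L^p(W)$ must be used at once. The mechanism, following \cite{BuTo}, is a $g$-adapted localization: one chooses a partition of unity subordinate to a covering of $W$ by $g$-balls (available since $g$ is slowly varying), decomposes $a=\sum _\nu a_\nu$, and invokes confinement estimates of Bony--Chemin type (cf. \cite{BoCh}) to show that $e^{i(t-\frac 12)\scal {D_\xi}{D_x}}$ is almost diagonal in this decomposition, with matrix elements decaying faster than any power of the $g$-distance between the localizing balls. Because the number of balls within a given $g$-distance grows only polynomially, a discrete Young/Schur estimate against this rapidly decaying kernel (equivalently, a $g$-adapted interpolation between the endpoints $k=0$ and $k\ge N/2$) transfers $L^p$-control from $a$ to each intermediate piece. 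The same almost-diagonal structure shows that vanishing at infinity is preserved, settling the $L^\infty _0(W)$ case; and since the kernels are uniformly confined for the bounded family $t-\frac 12\in [-\tfrac 12,\tfrac 12]$ (compare the boundedness in $\Sigma _1$ of $\{\Psi _t\}$ used earlier, cf. \cite{CaTo,Tr}), every bound is uniform in $t\in [0,1]$.

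The main obstacle is precisely this intermediate regime. The operator $e^{i(t-\frac 12)\scal {D_\xi}{D_x}}$ is unitary on $L^2$ but has a unimodular multiplier, so it is \emph{not} bounded on $L^p(W)$ for $p\neq 2$ as an operator on the full space; the whole point is to recover $L^p$- and $L^\infty _0$-boundedness only after intersecting with $S(m,g)$. Carrying this out rigorously requires the $g$-adapted confinement calculus rather than the plain symbol bounds, and the technical heart is verifying that the off-diagonal decay is summable against the weight $m$, which is where the assumption $h_g^{N/2}m\in L^p(W)$ re-enters.
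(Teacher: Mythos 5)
The paper never writes out a proof of this lemma: it is stated with the remark that the result ``follows by the arguments in the proof of \cite[Proposition 2.10]{BuTo}'', so there is no in-house argument to compare with line by line. Your proposal reconstructs precisely that route, and the skeleton is sound: you correctly identify $a\mapsto a_t$ with the unimodular Fourier multiplier $e^{i(t-\frac 12)\scal {D_\xi}{D_x}}$, so that the inverse is the multiplier with opposite sign and the isomorphism claim reduces to a one-directional bound; you correctly take the $S(m,g)$-boundedness and the uniformity in $t\in [0,1]$ from Proposition \ref{Calculitransfer}; and you correctly dispose of the remainder and of the Taylor terms with $k\ge N/2$ by the pointwise bounds $|\scal {D_\xi}{D_x}^k a|\lesssim h_g^km\le h_g^{N/2}m\in L^p(W)$ (a step which, as you implicitly use, requires $g$ to be split, and which is among the hypotheses). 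Your diagnosis that the entire difficulty sits in the terms $1\le k<N/2$, and that the bare multiplier is not $L^p$-bounded for $p\neq 2$, is exactly right and is why the lemma is nontrivial.

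One caveat on the core step: you offer the Schur/almost-diagonality estimate and the ``$g$-adapted interpolation between the endpoints'' as equivalent mechanisms, and they are not; the first alone would not close the argument. Almost-diagonality of $e^{i(t-\frac 12)\scal {D_\xi}{D_x}}$ with respect to a partition $a=\sum _\nu \varphi _\nu a$ into $g$-confined pieces controls the off-diagonal tails, but on the diagonal it only yields block bounds of size $m(X_\nu )|U_\nu |^{1/p}$, not bounds in terms of $\nm {\varphi _\nu a}{L^p}$; summing these would require $m\in L^p(W)$, which is exactly what the hypothesis does \emph{not} give (only $h_g^{N/2}m\in L^p(W)$ is assumed). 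What actually handles the intermediate regime in the Buzano--Toft type argument is the localized interpolation: on each $g$-ball, after the affine rescaling permitted by slow variation, a Landau--Kolmogorov/Gagliardo--Nirenberg inequality gives, schematically,
\begin{equation*}
\nm {\scal {D_\xi}{D_x}^k(\varphi _\nu a)}{L^p}
\lesssim
h_g(X_\nu )^k\, \nm {\varphi _\nu a}{L^p}^{1-\theta}
\bigl ( m(X_\nu )|U_\nu |^{1/p}\bigr )^{\theta},
\qquad \theta = 2k/N,
\end{equation*}
so that $h_g(X_\nu )^k m(X_\nu )^\theta =(h_g(X_\nu )^{N/2}m(X_\nu ))^\theta$, and a discrete H{\"o}lder inequality in $\nu$ (with bounded overlap of the balls) then interpolates between $\nm a{L^p}$ and $\nm {h_g^{N/2}m}{L^p}$; the $L^\infty _0$ case follows along the same lines. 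You name this mechanism only parenthetically. Since the paper itself delegates the whole proof to the citation, your sketch is no less complete than the paper's treatment; but to stand on its own it would need this interpolation step carried out explicitly, for it, and not the Schur estimate, is where the hypothesis $h_g^{N/2}m\in L^p(W)$ re-enters, as you in fact anticipate in your closing paragraph.
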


\par

\begin{proof}[Proof of Theorem \ref{HormSchattenBJ}]
Again let $b$ be chosen such that $\op ^w(b)= \opBJ (a)$. Then
Lemma \ref{intersectionlemma} shows that $b\in L^p(\rr {2d})\cap S(m,g)$
when $a\in L^p(\rr {2d})\cap S(m,g)$, and that $b\in L^\infty _0(\rr {2d})\cap S(m,g)$
when $b\in L^\infty _0(\rr {2d})\cap S(m,g)$. The result now follows from
Theorem 2.9 in \cite{BuTo}.
\end{proof}

\par

\begin{proof}[Proof of Theorem \ref{HormSchattenBJcor}]
The result is a special case of Theorem 4.6 in the case $p\ge 1$.
If instead $p<1$, then the result follows by combining
\cite[Theorem 4.1]{Toft19} with Theorem \ref{BornJordanHormWeyl}.
\end{proof}

\par

Finally we also have the following Feffermann-Phong's inequality inequality
for Born-Jordan operators, which in particular shows that Sharp-G{\aa}rding's
inequality is also true for such operators.

\par

\begin{thm}\label{GaardingIneq}
Let $g$ be strongly feasible and split metric on $\rr {2d}$, 
and let $0\le a\in S(h_g^{-2},g)$. Then $\opBJ (a) \ge -C$
for some constant $C\ge 0$.
\end{thm}

\par

\begin{proof}
By letting $b$ be defined by $\op ^w(b)=\op _{BJ}(a)$ choosing $N=2$ in
Theorem \ref{BornJordanHormWeyl}, it follows that $b=a+c$, where
$c\in S(1,g)$. The result now follows from the facts that $\op ^w(a)$ is lower
bounded on $L^2$ by the Feffermann-Phong's
inequality for Weyl operators (cf. \cite[Theorem 18.6.8]{Horm}),
and the fact that $\op ^w(c)$ is bounded on $L^2$ in view of
\cite[Proposition 18.6.3]{Horm}.
\end{proof}

\par

\begin{rem}
By similar arguments as in the proof of Proposition \ref{GaardingIneq}, it follows
that H{\"o}rmander's improvement of Melin's inequality given in
Theorem 6.2 in \cite{Horm0}, holds true with Born-Jordan operators in place of
Weyl operators. That is, if $g$ is strongly feasible and split metric on
$\rr {2d}$ and $a$ satisfies the same conditions as in Theorem 6.2 in \cite{Horm0},
then $\op _{\BJ}(a)$ is bounded from below. (See also the introduction.)
\end{rem}

\par

\begin{rem}
In \cite{BoCh}, Bony and Chemin introduced a broad family of Sobolev
type spaces, where each space $H(m_0,g)$ is a Hilbert space and depends on
the choice of the strongly feasible metric $g$ and the $g$-feasible weight
$m_0$. By Th{\'e}or{\`e}me 4.5 and Corollaire 6.6 in \cite{BoCh} it follows that
$H(1,g)=L^2$ and that there are $a\in S(m_0,g)$ and $b\in S(1/m_0,g)$ such that
corresponding pseudo-differential operators are inverses to each others and
lift $H(m\cdot m_0,g)$ and $H(m/m_0,g)$, respectively, to $H(m,g)$. That is,
$$
\op ^w(a)\circ \op ^w(b) = \op ^w(b)\circ \op ^w(a)
$$
equals to the identity operator on $\mascS '(\rr d)$ and
$$
\op (a)\, :\, H(m\cdot m_0,g) \to H(m,g)
\quad \text{and}\quad
\op (b)\, :\, H(m/m_0,g) \to H(m,g)
$$
are bijections for every $g$-feasible weight $m$. 

\par

From these results
it follows that Theorems \ref{HormSchattenBJ}, \ref{HormSchattenBJcor}
and \ref{GaardingIneq} can be generalized to involve $H(m,g)$ spaces.
For example,  it follows from these results and Proposition \ref{BJL2cont} that if
$g$ is a strongly feasible and split metric on $\rr {2d}$, $m$ and $m_0$ are $g$-feasible,
and $a\in S(m_0,g)$. Then $\opBJ (a)$ is continuous from $H(m_0,g)$ to $H(m_0/m,g)$.
\end{rem}

\par

\begin{rem}
Evidently, all continuity and compactness results above applies on the symbol
classes in Remark \ref{Rem:ClassicalSymbolClasses}. For example, if $a$
is symbol in a SG-class or Shubin class and belongs to $L^p(\rr {2d})$ for some
$p\in [1,\infty ]$, then Theorem \ref{HormSchattenBJcor} shows that
$\op _{\BJ}(a)\in \mascI _p(L^2(\rr d))$;
\end{rem}

\par

\section{Born-Jordan operators of infinite orders}\label{sec5}

\par

In this section we consider Born-Jordan operators with symbols
belonging to classes considered in \cite{CaTo}, or more generally,
classes considered in \cite{AbCaTo}. This means that the
symbols obey ultra-regularity conditions (i.{\,}e. certain types of
Gevrey regularity), but are allowed to grow superexponentially. In
particular, they are allowed to grow faster than polynomials. It is proved
in \cite{AbCaTo,CaTo} that corresponding pseudo-differential operators
are continuous on suitable Gelfand-Shilov spaces and their duals.
Here we use \eqref{Eq:PseudoBJ1} to carry over
these continuity properties and to Born-Jordan operators.

\par

The symbol classes which we shall consider are given in the following
definition.

\par

\begin{defn}\label{Def:ExtGSSymbClasses}
For $s_j,\sigma_j> 0$, $j=1,2$, and $h,r>0$ 
and $f\in C^\infty(\rr {d_1+d_2})$, let
\begin{equation}\label{symbols}
   \nm{f}{(h,r)}\equiv \sup \left (\frac{\abp{\partial _{x_1}
    ^{\alpha_1}\partial _{x_2}^{\alpha_2}f(x_1,x_2)}}
    {h^{\abp{\alpha_1+\alpha_2}}\alpha_1!^{\sigma _1}
    \alpha_2!^{\sigma _2}e^{r(\abp{x_1}^{\frac 1{s_1}}
    +\abp{x_2}^{\frac 1{s_2}})}} \right ), 
\end{equation}
where the supremum is taken over all $\alpha_1\in \nn {d_1},\alpha_2
\in \nn {d_2},x_1\in \rr {d_1}$ and $x_2\in \rr {d_2}$.
\begin{enumerate}
    \item $\Gamma _{s_1,s_2;0} ^{\sigma_1,\sigma_2} (\rr {d_1+d_2})$
    consists of all $f\in C^\infty(\rr {d_1+d_2})$ such that for some
    $h>0$,
    $\nm{f}{(h,r)}$ is finite for every $r>0$;
    
    \vrum
    
    \item $\Gamma _{s_1,s_2} ^{\sigma_1,\sigma_2;0} (\rr {d_1+d_2})$
    consists of all $f\in C^\infty(\rr {d_1+d_2})$ such that for some
    $r>0$, $\nm{f}{(h,r)}$ is finite for every $h>0$;
    
    \vrum
    
    \item $\Gamma _{s_1,s_2;0} ^{\sigma_1,\sigma_2;0} (\rr {d_1+d_2})$ 
    consists of all
    $f\in C^\infty(\rr {d_1+d_2})$ such that $\nm{f}{(h,r)}$ is
    finite for every $h,r>0$.
\end{enumerate}
\end{defn}

\par

The topologies of the spaces in Definition \ref{Def:ExtGSSymbClasses}
are given by canonical combinations of inductive limit and
projective limit topologies (cf. \cite{AbCaTo}).

\par

We have now the following analogies of Theorems
\ref{Thm:BJopsDistrMap} and \ref{BJGSSymbols}.

\par

\begin{thm}\label{Thm:BJopsGammaMap}
Let $s,\sigma >0$ be such that $s+\sigma \ge 1$.
Then the following is true:
\begin{enumerate}
\item if $a\in \Gamma _{s,\sigma ;0}^{\sigma ,s}(\rr {2d})$, then
$\op _{\BJ}(a)$ is continuous on $\maclS _s^\sigma (\rr d)$ and
is uniquely extendable to a continuous map on
$(\maclS _s^\sigma )'(\rr d)$;

\vrum

\item if in addition $(s,\sigma )\neq (\frac 12,\frac 12)$
and $a\in \Gamma _{s,\sigma}^{\sigma ,s;0}(\rr {2d})$, then
$\op _{\BJ}(a)$ is continuous on $\Sigma _s^\sigma (\rr d)$ and
is uniquely extendable to a continuous map on
$(\Sigma _s^\sigma )'(\rr d)$.
\end{enumerate}
\end{thm}

\par

\begin{thm}\label{BJGammaSymbols}
Let $s,\sigma >0$ be such that $s+\sigma \ge 1$, and let
$t\in \mathbf R$. If $a\in \Gamma _{s,\sigma ;0}^{\sigma ,s}(\rr {2d})$, then
$\op _{\BJ}(a)=\op _t(b)$, for some
$b\in \Gamma _{s,\sigma ;0}^{\sigma ,s}(\rr {2d})$.

\par

If in addition $(s,\sigma )\neq (\frac 12,\frac 12)$, then
the same holds true with $\Gamma _{s,\sigma}^{\sigma ,s;0}(\rr {2d})$
or $\Gamma _{s,\sigma ;0}^{\sigma ,s;0}(\rr {2d})$
in place of $\Gamma _{s,\sigma ;0}^{\sigma ,s}(\rr {2d})$ at each
occurrence.
\end{thm}

\par

Theorem \ref{BJGammaSymbols} follows from
Theorems 3.1 and 3.6 in \cite{AbCaTo} and \eqref{Eq:WeylBJ1}.
Theorem \ref{Thm:BJopsGammaMap} then follows by
combining Theorems 3.8 and 3.15 in \cite{AbCaTo} with
Theorem \ref{BJGammaSymbols}. The details
are left for the reader.

\par

\end{document}